\documentclass{amsart}
\usepackage{amsmath,amssymb}
\usepackage{amsthm,amscd,amssymb,appendix,verbatim,epsf,amsmath,eurosym,amsfonts,mathrsfs,graphicx}
\usepackage{tikz-cd}
\usepackage[colorlinks=true,linkcolor=blue,citecolor=blue]{hyperref}
\usepackage{amsmath, amsthm, amscd, amsfonts, amssymb, graphicx, color}
\usepackage{tikz-cd}
\usepackage{parskip}
\newcommand{\be}{\begin{equation}}
\newcommand{\ee}{\end{equation}}
\newcommand{\bea}{\begin{eqnarray}}
\newcommand{\eea}{\end{eqnarray}}
\newcommand{\bean}{\begin{eqnarray*}}
\newcommand{\eean}{\end{eqnarray*}}          
\usepackage[english]{babel}
 \newtheorem{theorem}{Theorem}
 
\newtheorem{proposition}[theorem]{Proposition}
\newtheorem{definition}[theorem]{Definition}

\begin{document}
\setlength{\parindent}{15pt} 
\setlength{\parskip}{0.2mm}

\title[The Bishop family of holomorphic discs]{The Bishop family of holomorphic discs: regularity and higher index}
\author{Brendan Guilfoyle}
\address{Brendan Guilfoyle\\
          Department of Technology, Engineering and Mathematics\\
          Faulty of Science and Informatics \\
          Munster Technological University\\
          Tralee\\
          Co. Kerry\\
          Ireland.}
\email{brendan.guilfoyle@mtu.ie}
\author{Wilhelm Klingenberg}
\address{Wilhelm Klingenberg\\
 Department of Mathematical Sciences\\
 University of Durham\\
 Durham DH1 3LE\\
 United Kingdom.}
\email{wilhelm.klingenberg@durham.ac.uk }

\begin{abstract}
We prove $C^{k/2,\alpha/2}$ -regularity  up to a non-umbilic elliptic complex point for the Bishop family of holomorphic discs with boundary in a $C^{k,\alpha}$  regular real surface. Furthermore, we prove existence and regularity  of holomorphic discs near certain complex points of index $\ge 2$. 

The proof employs a novel blow-up of the real surface which resolves the complex point to a pair of totally real surfaces and leads to a $\mathbb{Z}_2$ -equivariant  Riemann-Hilbert problem for holomorphic annuli. The index is computed to be 1 and the problem is shown to be Fredholm-regular.
\end{abstract} 
\keywords{Holomorphic discs, Lagrangian boundary conditions, complex index}
\subjclass[2010]{32V20}

\date{\today}
\maketitle
\setcounter{section}{0} 

\section{Introduction}
\noindent
We study the existence and regularity of the solution space of a Riemann-Hilbert problem for  holomorphic curves of disc- and annulus-type in a complex surface $(M,J)$ with the boundaries constrained to lie in a fixed embedded real surface
\be\label{e:1}
  F:  S \quad \hookrightarrow \quad M.
\ee
Thus $M$ is a real 4-manifold endowed with a complex structure $J$, and $F(S) \subset M$ is a real surface, also denoted $S$, and we seek maps $f:(D,\partial D)\rightarrow(M,S)$ such that $\bar{\partial}_Jf=0$. 

A point $ q\in S$, is called a \textit{complex point of S} if the real plane $T_ q  S\subset(T_{q} M,J_q))) \cong {\mathbb{C}}^2$ is a complex line. This may be characterized by the vanishing of a section of a certain bundle over $S$ (see equation (\ref{e:4})) and a complex point is called \textit{elliptic} if this section is transverse to the zero section, with intersection number one. In this case, the point  is called \textit{umbilic (non-umbilic) elliptic} if the eigenvalues of the linearization of the section at $q$ do  (do not) coincide. 

To formalize the moduli space of holomorphic discs, let $D\subseteq{\mathbb{C}}$ be the unit disc, Aut(D)  be  its M\"obius group, $p > 2$,  and 
\[
{\mathcal D}(M, S):=\{f:(D,\partial D)\to(M, S)\:|\:f\in W^{1,p}(\overline D) {\mbox{ and }} \bar\partial f=0\}/\mbox{Aut}(D).
\]
Thus, ${\mathcal D}(M, S)$ is the set of unparameterized holomorphic discs of Sobolev class $W^{1,p}(\overline D)$ in $M$ with boundary in $S$. Now 
\[
{\mathcal D}(M, S) \subset W^{1,p}(D,\partial D; M,  S)/\rm{Diffeo}(D),
\]
where $\rm{Diffeo}(D)$ is the diffeomorphism group of the disc. The right hand side is the set of unparameterized discs with boundary of class $W^{1,p}$. This is a Frechet manifold. A point $ q\in S$ may be regarded as a (degenerate) element of ${\mathcal D}(M, S)$ - the constant (thereby holomorphic) map to $q$. Let ${\mathcal D}_ q(M, S)$ be a neighbourhood of $ q\in{\mathcal D}(M, S)$ in the induced topology. 

Our main result is the following.
\\
\begin{theorem} \label{t:1} 
Let $F: S\;{\hookrightarrow}\; M$ be a real surface in a complex surface and let $ q\in S$ be a non-umbilic elliptic point. Assume that $F$ is $C^{k,\alpha}$-smooth, for some $k \ge 1, \alpha > 0$. Then  
\begin{enumerate}
\item[(a)]  ${\mathcal D}_ q(M, S)$ is a $C^{\frac{k}{2},\frac{\alpha}{2}}$ -smooth 1-dimensional submanifold of the Frechet manifold $W^{1,p}(D,\partial D;M, S) ${\rm /Diffeo(D)} with boundary point $ q\in\partial{\mathcal D}_ q(M, S)$,
\item[(b)] In a neighborhood of $ q\in\partial{\mathcal D}_ q(M, S)$, the elements of ${\mathcal D}_ q(M, S)$ have uniformly bounded $C^{\frac{k}{2},\frac{\alpha}{2}}(\overline D )$-norm, 
\item[(c)] $T_ q{\mathcal D}_ q(M, S)={\mathcal D}_0({\mathbb{C}}^2,  S_2)$, being a family of holomorphic discs given explicitly in Proposition \ref{p:4}.
\end{enumerate} 
\end{theorem}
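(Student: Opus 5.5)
Following the strategy announced in the abstract, I will work in local holomorphic coordinates $(z,w)$ centred at $q$ in which, after a Moser--Webster type quadratic normalisation (only the quadratic part, since $F$ is merely $C^{k,\alpha}$), the surface has the form
\[
S=\{\,w=|z|^2+\gamma(z^2+\bar z^2)+R(z),\ R=O(|z|^{2+\alpha})\,\},\qquad 0\le\gamma<\tfrac12 ,
\]
with $\gamma>0$ precisely in the non-umbilic elliptic case. The model surface $S_2$ is $R\equiv0$. Its Bishop discs are explicit: they are the intersections of $S_2$ with the complex lines $w=t$, $t>0$, each bounding an ellipse that is filled by an affine disc. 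These discs make up the one-parameter family ${\mathcal D}_0({\mathbb{C}}^2,S_2)$ of Proposition \ref{p:4}. This already gives (c) for the model, provided we show that the family for $S$ is tangent at $t=0$ to the model family after rescaling.

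Next comes the blow-up. I introduce the parabolic rescaling $z=\epsilon\zeta$, $w=\epsilon^2\omega$ and write the boundary condition in the new coordinates. As $\epsilon\to0$ the rescaled surfaces $S_\epsilon$ converge in $C^{k-2,\alpha}$ to $S_2$, with error $O(\epsilon^\alpha)$. To get rid of the complex point itself, I pass to the double cover. Writing the disc boundary as $\zeta=\sqrt{\omega}\,\xi$ and replacing the parameter $\sqrt{t}$ by $s$ resolves $q$ into the two sheets $s>0$ and $s<0$. This gives a pair of totally real surfaces, exchanged by the involution $s\mapsto -s$, $\xi\mapsto-\xi$. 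A disc whose boundary winds once around $q$ then lifts to a $\mathbb{Z}_2$-equivariant holomorphic annulus, with one boundary circle on each totally real sheet. This is the step that converts $k$ derivatives in $z$ into $k/2$ derivatives in the parameter: smoothness in $s=\sqrt t$ and in $\epsilon^2$ corresponds to $C^{k/2,\alpha/2}$ in the original variables.

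I then set up the nonlinear Riemann--Hilbert operator
\[
\Phi:(\epsilon,u)\mapsto \bigl(\bar\partial u,\ \text{dist}(u|_{\partial A},S_\epsilon)\bigr)
\]
on $\mathbb{Z}_2$-equivariant $W^{1,p}$ maps of the annulus $A$. I linearise at the model annulus ($\epsilon=0$), which is coming from $S_2$. The linearisation is a Riemann--Hilbert problem with totally real boundary conditions, and its Maslov indices are computable explicitly from the ellipse parametrisation. I expect the equivariant index to be $1$ and the cokernel to vanish. To prove Fredholm regularity, I would write the boundary conditions as $\mathrm{Re}(\bar a_j u_j)=0$ with explicit loops $a_j$, split into Fourier modes, and check that each partial index is $\ge -1$ (Oh's criterion for the annulus, respecting the symmetry). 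The kernel is then spanned by the infinitesimal variation in $s$ modulo the automorphisms of the annulus.

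With the operator Fredholm-regular, the implicit function theorem (in the $C^{k-2,\alpha}$ Banach setting, with $\epsilon$ as a parameter) gives, for small $\epsilon$, a unique one-dimensional family near the model. Uniqueness near $q$ — every small disc in ${\mathcal D}_q$ must appear here — should follow from Gromov compactness after rescaling by area, which tends to $0$. Undoing the rescaling gives (a): a $C^{k/2,\alpha/2}$ half-line ending at the constant disc $q$. Since the rescaled discs converge uniformly in $C^{k/2,\alpha/2}$ to the explicit model discs, we get the uniform bounds in (b) and the tangent-space identification in (c).

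The main obstacle will be the index and regularity computation for the equivariant annulus problem. In particular, showing that the cokernel vanishes uniformly as $\epsilon\to0$ requires the non-umbilic hypothesis $\gamma\neq0$. In the umbilic case the model degenerates: circles give extra rotational symmetry and a larger kernel. A second delicate point is tracking the loss of exactly half the regularity through the square-root blow-up.
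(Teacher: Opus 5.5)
\textbf{There is a genuine gap: the resolution you describe does not produce the equivariant annulus problem you then linearise.}

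Your substitution $\zeta=s\xi$, $\omega=s^2$ is a $(1,2)$-weighted blow-up, i.e.\ rescaling by the size of the disc. Its deck involution is $\iota(\xi,s)=(-\xi,-s)$, which fixes only $(0,0)$. Under this map $S_2$ lifts to the single totally real cylinder $\mathbb{R}\times E$, where $E=\{|\xi|^2+2\gamma\,\mathrm{Re}\,\xi^2=1\}$; the two ``sheets'' $s\gtrless 0$ are glued along $s=0$. Each Bishop disc $\{\omega=t\}$ lifts to two \emph{disjoint} discs $\{s=\pm\sqrt t\}\times\mathrm{int}\,E$, which $\iota$ interchanges.

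So there is no annulus with one boundary circle on each sheet. Worse, imposing $\iota$-equivariance destroys the family: the only invariant disc is the central one at $s=0$, and the deformation $\partial_s$ that spans the family is anti-invariant.

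The missing idea is a double cover branched along a complex curve that meets every Bishop disc in two interior points. The paper uses $F_2(z_1,w_1)=(z_1,az_1^2+2bz_1w_1+cw_1^2)$. This is a branched double cover, with linear deck map $\tau$, only because $c\neq 0$; that, not a cokernel issue, is where non-umbilicity enters. The construction then runs as follows.
\begin{itemize}
\item $F_2^{-1}(S_2)$ is a pair of transverse totally real planes $L^\pm$.
\item Ellipticity means no complex line meets both planes in real lines.
\item Hence, after blowing up $0\in\mathbb{C}^2_{z_1,w_1}$, the proper transforms are disjoint M\"obius bands. They cut the exceptional $\mathbb{C}P^1$ in two disjoint circles bounding an annulus $A$ that contains both fixed points of $\tau$.
\item Lifts of Bishop discs are then $\tau$-invariant annuli near $A$.
\end{itemize}

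\textbf{The analytic core is only ``expected'', and the tool you name would not settle it.} On an annulus, a line-bundle problem of normal Maslov index $0$ has index $0$. It doubles to a degree-$0$ bundle on a torus, so there is no automatic transversality; the trivial problem with real boundary values already has one-dimensional kernel and cokernel. ``Partial indices $\ge -1$'' is a criterion for discs, not annuli.

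The paper instead computes
\[
\operatorname{index}L_\tau=\tfrac12\bigl(\operatorname{index}L+\mathrm{Lef}(\tau,E)\bigr)=\tfrac12(0+2)=1
\]
from a Lefschetz fixed-point formula, each of the two fixed points contributing $\nu=\tfrac14(2-(-2))=1$. It gets surjectivity by passing to the quotient disc $A/\tau$ and doubling to a sphere, where $c_1=0\ge -1$.

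If you drop equivariance and run a rescaled \emph{disc} argument in your weighted blow-up, the model problem at a Bishop disc of $S_2$ has partial indices $(2,0)$. It is surjective for every $0\le\gamma<\tfrac12$, because target rotations at $\gamma=0$ act by reparametrisation and add no kernel. So your stated reason for needing non-umbilicity is mistaken, and that disc route is a legitimate alternative for existence.

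It does not, however, give the regularity you claim. The lifted cylinder involves $R(s\xi)/s^2$ and is in general only $C^{k-2,\alpha}$. Your statement that the square root turns this into exactly $C^{k/2,\alpha/2}$ is asserted, not derived. In the paper the exponent is traced to the regularity of $\Lambda^\pm=F_2^{-1}(S)$ under the quadratic branching of $F_2$, combined with boundary regularity for the annuli.
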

Here $C^{k,\alpha}$ refers to H\"older spaces and claim (a) gives the H\"older regularity of the dependence of the holomorphic discs in ${\mathcal D}_ q(M, S)$ on a real parameter up to the boundary $q\in {\partial\mathcal D}_ q$ with reference to the Frechet-topology of Sobolev space $W^{1,p}(D,\partial D;M, S){\rm /Diffeo(D)}$.

By elliptic regularity for the Riemann-Hilbert boundary problem, the  points (i.e. holomorphic discs) of ${\mathcal D}_ q(M, S)$ with totally real boundary are in $C^{k,\alpha}(\overline D)$. Claim (b) asserts that their $C^{\frac{k}{2},\frac{\alpha}{2}}$-norm is bounded in a neighborhood of the complex point
$ q \in \partial{\mathcal D}_ q(M, S)$.

Claim (c) says that the 1-manifold with boundary ${\mathcal D}_ q(M, S)$ is modeled near the boundary point $q$ by ${\mathcal D}_0(T_m M,S_2)$. In particular, as a subset in $M$,  the family of discs ${\mathcal D}_ q(M, S)$ foliates an embedded three-manifold of regularity $C^{\frac{k}{2},\frac{\alpha}{2}}$ up to $S$. Our proof also shows that if $F$ is real analytic, then so is  ${\mathcal D}_ q(M, S)$ in claim (a) and (b).

Our main result implies the following.
\\
\begin{theorem}\label{t:2}
Let $F:  S\hookrightarrow (M,J)$ be an embedded real surface in a complex surface and $ q\in  S$ be a non-umbilic elliptic point. Then, for a neighborhood $S_ q$ of $q$ in $S$, there exists an embedded real hypersurface with boundary $ (N,   S_ q , q)\hookrightarrow (M,  S , q)$ such that
 \begin{enumerate}
\item[(a)] $N$ is Levi-flat and foliated by $J$-holomorphic discs in $(M,J)$ and boundary in $  S$,
\item[(b)] if $S$ is $C^{k,\alpha}$-smooth, then $N$ is $C^{\frac{k}{2}, \frac{\alpha}{2}}$-smooth up to the boundary $S$ and if 
$S$ is  $ C^{\omega}$,  then so is $N$,
\item[(c)] the family of discs converges to the constant map at $q$.
\end{enumerate}
\end{theorem}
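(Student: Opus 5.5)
Theorem \ref{t:2} will be deduced from Theorem \ref{t:1} by taking $N$ to be the union of the images of the discs in ${\mathcal D}_q(M,S)$. By Theorem \ref{t:1}(a) we have a $C^{\frac{k}{2},\frac{\alpha}{2}}$ one-parameter family $t\mapsto [f_t]$, $t\in[0,\epsilon)$, with $f_0\equiv q$. We fix parametrizations by choosing a smooth local slice for the $\mathrm{Aut}(D)$-action, for instance by normalizing three boundary points or by a centre-and-rotation normalization. This gives a map
\[
\Phi:[0,\epsilon)\times \overline D\to M,\qquad \Phi(t,z)=f_t(z).
\]
By (a) and (b), $\Phi$ is $C^{\frac{k}{2},\frac{\alpha}{2}}$ jointly on $(0,\epsilon)\times\overline D$, with bounds uniform as $t\to0$. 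We then set $N=\Phi((0,\epsilon)\times\overline D)\cup\{q\}$. Near $q$ the boundary $\Phi((0,\epsilon)\times\partial D)\cup\{q\}$ sweeps out a neighbourhood $S_q$ of $q$ in $S$, by the same argument as in the model case below.

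The key steps, in order, are as follows.

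\begin{enumerate}
\item[(i)] \textbf{Model case.} Use Theorem \ref{t:1}(c): $T_q{\mathcal D}_q={\mathcal D}_0({\mathbb C}^2,S_2)$. For the quadric model $S_2$ (non-umbilic elliptic), the explicit discs of Proposition \ref{p:4} are disjoint. Their boundaries are the nested ellipses foliating $S_2\setminus\{0\}$, and their union is a Levi-flat hypersurface in ${\mathbb C}^2$ bounded by $S_2$.
\item[(ii)] \textbf{Embeddedness and foliation by perturbation.} After the rescaling by $\sqrt t$ implicit in the blow-up (this rescaling is the source of the halved regularity), the rescaled family $\tilde\Phi$ converges in $C^1$ to the model family as $t\to0$. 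Since $\tilde\Phi$ is an injective immersion of maximal rank $3$ in the model, and these are open conditions, $\Phi$ is an injective immersion on $(0,\epsilon)\times\overline D$ for small $\epsilon$.
\item[(iii)] \textbf{Boundary foliates $S_q$.} The boundary curves $\Phi(t,\partial D)$ are disjoint embedded loops in $S$ that shrink to $q$. This is again a $C^1$-small perturbation of the ellipses in (i), so together they fill a punctured neighbourhood of $q$.
\item[(iv)] \textbf{Levi-flatness.} This follows because $N$ is a $C^1$ hypersurface foliated by complex curves: $TN\cap JTN$ is the tangent line to the leaves, which is integrable, and this is equivalent to vanishing Levi form. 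This gives (a).
\item[(v)] \textbf{Regularity and convergence.} (b) follows from the regularity of $\Phi$ in Theorem \ref{t:1}(a),(b), together with the inverse function theorem. For $S$ real analytic, the remark after Theorem \ref{t:1} gives analyticity of $N$. (c) is the statement that $f_t\to q$ uniformly, which follows from the $C^0$ bound in the rescaled picture.
\end{enumerate}

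The main obstacle is regularity of $N$ up to its boundary near the singular point $q$, where the parametrization degenerates. At $q$, $\Phi$ has a cone-type singularity in the variable $t$. Rather than working with $\Phi$ directly, I would reparametrize by $s=\sqrt t$, or whatever parameter the blow-up naturally supplies, and argue in blown-up coordinates. In those coordinates the pair of totally real surfaces gives a smooth family of annuli up to $s=0$. I would then check that blowing down costs exactly a factor of $\frac12$ in Hölder exponent, yielding $C^{\frac{k}{2},\frac{\alpha}{2}}$.
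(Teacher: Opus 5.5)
Your outline follows the paper's: $N$ is the union of the discs in $\mathcal{D}_q(M,S)$. Disjointness near $q$ comes from comparison with the model family via Theorem~\ref{t:1}(c), the hypersurface structure and regularity from Theorem~\ref{t:1}(a),(b), and (c) from Theorem~\ref{t:1}(c). Your steps (iii) and (iv) spell out points the paper leaves implicit. The problem is step (ii). This is where you try to supply the disjointness mechanism the paper only asserts, and as written it fails.

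If each disc is rescaled by its own size, $\rho_t(\zeta,\omega)=(\zeta/\sqrt t,\,\omega/t)$, the model family of Proposition~\ref{p:4} becomes constant. Every model disc is sent to the same disc $\{Q(\zeta)<1,\ \omega=b\}$, where $Q(\zeta)=\frac{|c|}{|b|}(\zeta^2+\bar\zeta^2)+2\zeta\bar\zeta$. So the rescaled model map has rank $2$ and is not injective, and openness of injective immersions gives nothing. Without rescaling, the model map collapses $\{0\}\times\overline D$ to $q$, so there is no compact domain on which to use openness either.

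What works is rescaling at a fixed scale. Write $f^0_\lambda$ for the model disc $\{Q<\lambda,\ \omega=\lambda b\}$, parametrised as $(\sqrt\lambda\,g,\lambda b)$ with $g$ a Riemann map of $\overline D$ onto $\{Q\le 1\}$, and normalise the parameter of the actual family to match. You then need $(\lambda,z)\mapsto\rho_t(f_{\lambda t}(z))$ to converge to $(\lambda,z)\mapsto f^0_\lambda(z)$ in $C^1([\frac12,2]\times\overline D)$ as $t\to0$. The limit is an embedding: distinct $\lambda$ give distinct levels $\lambda b$, and the $\lambda$-derivative has $\omega$-component $b\neq0$. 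So openness now applies. Discs whose parameters differ by more than a factor $2$ are separated simply because their $\omega$-components are $tb(1+o(1))$.

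This scale-invariant $C^1$ convergence, including the derivative in the family parameter, is not contained in statements (a)--(c) of Theorem~\ref{t:1}. The bounds in (b) are unrescaled. To first order in $s=\sqrt t$ the model discs $(s\,g,\,s^2b)$ reduce to $(g,0)\subset T_qS$, so the separating level is invisible to a tangent-space statement. The convergence has to be extracted from the construction of the annuli in the blow-up. The paper's proof passes over exactly this point when it says Theorem~\ref{t:1}(c) implies disjointness.

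Your account of the regularity also differs from the paper's. In the paper the factor $\frac12$ is incurred when $S$ is lifted through the branched cover $F_2$ (Proposition~\ref{p:blowup}(b)). The annuli inherit $C^{\frac k2,\frac\alpha2}$ from their totally real boundary $\Lambda_0^\pm$, the blow-down and $F_2$ are holomorphic and lose nothing, and Theorem~\ref{t:2}(b) is read off Theorem~\ref{t:1}(a),(b). So the check in your last paragraph, that blowing down costs exactly a factor $\frac12$, targets a step where no loss occurs. Nor does the substitution $t=s^2$ halve H\"older exponents in any systematic way: the function $s$ becomes $\sqrt t$. What regularity of $N$ at $q$ itself really needs is the same scale-invariant control as disjointness. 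Uniform bounds on the individual discs, as in Theorem~\ref{t:1}(b), say nothing about how the discs fit together as $t\to0$.
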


The method of proof we employ extends to complex points of higher index, as long as the lowest order of their Taylor expansion has a certain quadratic factor. That is, both Theorems can be improved by assuming that the complex point is of any index $>0$ of the form given in equation (\ref{e:Shigher}).  

As this is the first such result about higher index complex points we state the equivalent of Theorem \ref{t:2} for higher index in full. 
\\
\begin{theorem}\label{t:3}
Let $F:S\hookrightarrow (M,J)$ be an embedded real surface in a complex surface and $ q\in S$ a complex point with Taylor series of the form given in equation (\ref{e:Shigher}) for some $n \ge 3.$  Then, for a neighborhood $S_ q$ of $q$ in $S$, there exists an embedded real hypersurface with boundary $ (N,S_q,q)\hookrightarrow (M,S,q)$ such that
\begin{enumerate}
\item[(a)] $N$ is Levi-flat and foliated by $J$-holomorphic curves of disc-type in $(M,J)$ and boundary in $  S$,
\item[(b)] Given $n$ as in equation \eqref{e:Shigher}, assume that $k\ge n$. If $S$ is $C^{k,\alpha}$ ($C^\omega$) -smooth, then $N$ is 
$C^{(k-n + 2)/2,\alpha/ 2}$  ($C^\omega$) -smooth  up to the boundary 
$S$,
\item[(c)] $N$ is tangent at $q$ to the hypersurface swept foliated by the family of discs in equation \eqref{e:fa}.
\end{enumerate}
\end{theorem}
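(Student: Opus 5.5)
The plan follows the scheme announced in the abstract and used for Theorem \ref{t:2}, adapted to the higher order Taylor expansion (\ref{e:Shigher}). First I would normalize coordinates near $q$. Choose holomorphic coordinates $(z,w)$ centred at $q$ so that $T_qS=\{w=0\}$ and $S$ is the graph $w=\Phi(z,\bar z)$. The lowest-order part of $\Phi$ is homogeneous of degree $n$ and, by (\ref{e:Shigher}), contains the quadratic factor (morally $|z|^2$ times a degree $n-2$ factor). The higher order terms are $O(|z|^{n+1})$ in $C^{k-n,\alpha}$, using $k\ge n$. The model surface $S_n$ given by the leading term carries the explicit family of holomorphic discs (\ref{e:fa}). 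The first step is to verify directly that these discs have boundary in $S_n$, sweep out a Levi-flat hypersurface $N_n$, and shrink to the constant disc at $0$ as the parameter tends to zero.

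Next comes the blow-up. Rescale $z=\epsilon\zeta$, and $w=\epsilon^{n}\omega$ (or the weight matching the homogeneity), and introduce the square-root type parameter. This is what produces the loss from $k$ to $(k-n+2)/2$, in the same way that $k\mapsto k/2$ arises in Theorem \ref{t:1}. The aim is to resolve the complex point into a pair of totally real surfaces $S^\pm_\epsilon$ interchanged by a $\mathbb Z_2$ action, with $S^\pm_\epsilon\to S^\pm_0$ in $C^{(k-n+2)/2,\alpha/2}$ as $\epsilon\to0$. Here $S^\pm_0$ is the resolution of the model $S_n$. A disc with boundary in $S$ near $q$ then corresponds to a $\mathbb Z_2$-equivariant holomorphic annulus with boundary components on $S^+_\epsilon$ and $S^-_\epsilon$. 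The model discs (\ref{e:fa}) correspond to a specific equivariant annulus $u_0$.

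The analytic core is to set up the nonlinear $\bar\partial$ operator on $\mathbb Z_2$-equivariant $W^{1,p}$ annuli with totally real boundary in $S^\pm_\epsilon$, and to linearize at $u_0$. I would compute the Maslov index of the boundary loops of the linearized problem using the quadratic factor. This factor should contribute the same partial indices as in the elliptic case, while the degree $n-2$ factor is absorbed by the rescaling, so that the index on the equivariant subspace is $1$. I would then show that the cokernel vanishes. To do this, split the problem into partial indices all $\ge -1$, via a Birkhoff factorization of the explicit model boundary loop, and invoke the standard vanishing of $H^1$ for such splittings. I expect this to be the main obstacle. For higher $n$ the boundary conditions are less symmetric, and one must check that the equivariance kills exactly the extra kernel directions, which correspond to reparametrizations and to the additional degree $n-2$ deformations. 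Failing this, I would restrict to the equivariant subspace and compute kernel and cokernel directly by Fourier expansion on the annulus.

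With Fredholm regularity at $\epsilon=0$ established, the implicit function theorem in the parameter $\epsilon$, whose regularity is $C^{(k-n+2)/2,\alpha/2}$, produces a one-parameter family of equivariant annuli $u_\epsilon$ for small $\epsilon$, unique modulo automorphisms. Blowing down gives holomorphic curves of disc type with boundary in $S$; in the analytic case this is analytic by the real-analytic implicit function theorem. Their union is $N$. Three facts remain to be checked.
\begin{enumerate}
\item[(a)] Embeddedness and the foliation follow from the $C^1$-closeness of the family to the model family, which foliates $N_n$ transversally. Levi-flatness then holds because $N$ is foliated by holomorphic curves.
\item[(b)] The regularity up to $S$ is inherited from the regularity in $\epsilon$ of the family.
\item[(c)] Tangency of $N$ to $N_n$ at $q$ follows since $u_\epsilon\to u_0$ as $\epsilon\to0$.
\end{enumerate}
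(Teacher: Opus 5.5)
\textbf{Gap 1: the index count where $n$ enters.} Your route is not the paper's. The paper never rescales. It lifts $S$ through the explicit map $F_n(z_1,w_1)=(z_1,z_1^{n-2}(az_1^2+2bz_1w_1+cw_1^2))$, blows up once, and linearizes at the fixed annulus $A$ in the exceptional fibre; the family is a kernel direction there, not a scale parameter. Your plan breaks exactly where $n$ enters.

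The weighted rescaling $(z,w)\mapsto(\epsilon z,\epsilon^n w)$ is a biholomorphism, so it cannot ``absorb'' the factor $z^{n-2}$ in any index count. Along the boundary of a unit model disc, $\partial_{\bar z}$ of the defining function is $z^{n-2}(2bz+2c\bar z)$, which winds $n-1$ times when $|b|>|c|$. The boundary Maslov index is therefore $2(n-1)$. The unconstrained problem for discs near the unit model disc then has index $2+2(n-1)=2n$, i.e.\ expected dimension $2n-3$ modulo $\mathrm{Aut}(D)$. Some concrete constraint must remove $2(n-2)$ directions.

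Your ``square-root type parameter'', the surfaces $S^\pm_\epsilon$ and the $\mathbb{Z}_2$-action are never defined. So the claims ``equivariant index $1$'' and ``equivariance kills the extra directions'' refer to no actual operator. In the paper the constraint is built into $F_n$: only discs that lift through $F_n$, i.e.\ those with $w/z^{n-2}$ holomorphic, are produced.

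The same division, $z_1^{2-n}O_{n+1}\in C^{k-n+2,\alpha}$ in Proposition~\ref{p:blowup-n} followed by the Weierstrass step, is where the exponent $(k-n+2)/2$ comes from. In your scheme $\epsilon^{-n}\Phi(\epsilon\,\cdot)$ is in general only $C^{k-n,\alpha}$ in $\epsilon$, so your exponent is asserted rather than derived.

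\textbf{Gap 2: the model family and embeddedness.} Your first and last steps also fail as written. The discs \eqref{e:fa} do not have boundary on $S_n$ for $n\ge 3$. The relevant model is Proposition~\ref{p:9}, $\omega=tb\zeta^{n-2}$, and every one of those discs contains $(0,0)$. They therefore do not foliate a hypersurface $N_n$ transversally near $q$, and your $C^1$-closeness argument for embeddedness in (a) breaks down precisely at $q$. Every family of discs around $q$ obtained by lifting through $F_n$ has the same feature, since $F_n$ collapses $\{z_1=0\}$ to $q$. Embeddedness needs an argument that neither your plan nor the paper's proof supplies.

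\textbf{Caution about importing Proposition~\ref{tauequiv}.} Do not try to close the index gap this way.
\begin{enumerate}
\item[(i)] Over the fixed point $\infty=[0:1]$, the fibre of the tautological bundle is the $(-1)$-eigenline $\mathbb{C}(0,1)$ of $\tau$. The paper's own Lefschetz formula then gives contributions $+1$ and $-1$, not $1$ and $1$.
\item[(ii)] Independently, the boundary bundles $R^\pm$ are M\"obius bundles. A nonzero holomorphic section of $p^{-1}A$ with boundary values in $R^\pm$ must therefore vanish somewhere on each boundary circle.
\item[(iii)] That is impossible when the total Maslov index, here $0$, equals twice the number of interior zeros plus the number of boundary zeros.
\end{enumerate}
Hence $\ker L=0$, so $\ker L_\tau=0$, and no family emanates from $A$ along that route either.
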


In \cite{B}, Bishop  proved existence and continuity of the family $\mathcal{D}_q $ up to the elliptic complex point $q$. This was improved by Kenig and Webster in \cite{KW}   who established $C^{(k-7)/3}$-regularity for $N$. The cases of a real analytic and $C^\infty$ regularity of $S$ were shown in \cite{Hu} to incur no loss of regularity of $N$.

Huang provides an example of a real surface $S$ with an {\it umbilic elliptic}  point $q$ for which the regularity of $N$ as Theorem \ref{t:2} (a) is no higher than $C^{(k+1)/2}$ \cite{Hu}. While our proof does not apply for such umbilic elliptic complex points, we conjecture that Theorem \ref{t:3} is optimal at any elliptic point. For related work on CR-singularities see \cite{FH18} \cite{HY16} \cite{HY17} \cite{W95}.

In their paper, the authors of \cite{KW} apply a Picard-type iteration scheme to pass from the osculating quadric $S_2$, which bounds the family of discs \eqref{e:fa} below, to the given surface $S$. In contrast, our proof is geometric and employs an ambient blow-up that regularizes the CR-singularity. This gives rise to a family of holomorphic curves of annulus-type in a branched cover  of $(M,J)$ whose boundary lie in a resulting totally real surface $\Lambda_0^{\pm}$, which is the union of two embedded M\"obius bands. These curves converge to an annulus in the exceptional fibre of the blow-up ${\rm Bl}_0(\mathbb{C}^2)$.

The CR-singularities, i.e. complex points, that have been considered previously have been elliptic, where the Keller-Maslov index is 1. In general, higher index complex points are associated with larger moduli spaces of holomorphic curves, but their construction is difficult and has not been undertaken previously. Complex points on a real surface in the complex surface $T{\mathbb S}^2$ with its canonical complex structure have particular significance, as $T{\mathbb S}^2$ can be identified with the space of oriented affine lines in ${\mathbb R}^3$. 

This identification is geometric and the associated neutral K\"ahler structure on $T{\mathbb S}^2$ carries much of the underlying Euclidean geometry. Thus the set of oriented normal lines to a surface $S'\subset{\mathbb R}^3$ form a surface $S$ in $T{\mathbb S}^2$ that is totally real on $S$ away from umbilic points on $S'$. The umbilic points of $S'$ give complex points on $S$ and the index determines the local behaviour of the principle foliation of the surface $S'$ \cite{GK05}.

In this setting, index 2 complex points of lowest order correspond to the umbilic point at the north or south pole of a rotationally symmetric surface in ${\mathbb R}^3$.  A long-standing local index Conjecture of Loewner, which implies a global Conjecture of Carath\'eodory, states that 2 is the highest complex index than can arise in this manner from an umbilic point on a surface in ${\mathbb R}^3$ \cite{GK19}\cite{GK20}\cite{GK24a}\cite{GK25}. The Conjecture of Loewner remains open, although the weaker local index bound of 4 has been proven in \cite{GK24c}.

Furthermore, we point out that the problem of finding a Levi-flat hypersurface with prescribed boundary is analogous to the classical Plateau problem for minimal surfaces in Euclidean three-space. In  the Plateau problem for minimal surfaces in Euclidean space, Hildebrandt \cite{Hi} proved that the surface retains the same regularity up to the boundary as that of the boundary condition.

Filling by Levi-flat hypersurfaces in $T{\mathbb S}^2$ can be quite rigid - as can be seen for minimal and CMC surfaces in \cite{MS23}, where the holomorphic discs correspond to parts of round spheres which have higher order contact with a minimal surface. For recent results on constructing Levi-flat hypersurfaces in $T{\mathbb S}^2$ around complex points of higher index using parabolic evolution equations, see \cite{GK24b}.  

The next section contains the required background on the blow-up of a complex point and the resulting boundary problem. In Section \ref{s:3} we prove Theorems \ref{t:1} and \ref{t:2} by computing the index of the associated equivariant boundary value problem and this is extended to a proof of Theorem \ref{t:3} in Section \ref{s:4}.

\vspace{0.1in}
\section{The quadratic osculant at an elliptic point}\label{s:2}

Let $(S,F,M)$ be as in map (\ref{e:1}) and $ q \in  S$ an isolated complex point of $F$. Choose a conformal structure $j$ on $S$ near $q$ by setting $j|_q:=F^*|_q J$ and extend $j$ to a neighborhood of $ q \in  S$. Let  $\Omega (S)={\rm Hom_{\mathbb{R}}}(T S, {\mathbb{C}})=\Omega^{1 0}(S) \oplus \Omega^{0 1}(S)$, where $\Omega^{1 0}(S)$ and $\Omega^{0 1}(S)$ is the bundle of holomorphic and anti-holomorphic 1-forms over $(S,j)$, respectively.

Now define 
\bea 
\partial F &:=& {\textstyle{\frac{1}{2}}}(dF-JdF\circ j)\in\Omega^{10}(F^*T^{10}M)\cong \Omega^{10}(S)\otimes F^*T^{10}M \\ \label{ba}
\bar{\partial}F &:=&{\textstyle{\frac{1}{2}}}(dF+JdF\circ j) \in\Omega^{01}(F^*T^{10}M)\cong \Omega^{01}(S)\otimes F^*T^{10}M. 
\eea
It follows that $\bar{\partial}F(q)=0$. Then
\be \label{e:4}
\partial F\wedge\bar{\partial} F\in \Omega^{10}(F^*T^{10}M)\wedge 
\Omega^{01}(F^*T^{10}M) \cong
{\rm det_{\mathbb{R}}} T^*  S \otimes {\rm det}_{\mathbb{C}} F^*T^{10}M.
\ee
This is  a section of a complex line bundle over $S$ which vanishes at the complex point $ q \in  S$ since the section \eqref{ba} vanishes at $q$. Since $F$ has maximal rank over ${\mathbb{R}}$, the section (\ref{e:4}) vanishes if and only if one (and indeed only one) of $\partial F, \;\bar\partial F$ vanishes.

 We choose holomorphic coordinates $(z,w)$ of $M$ near $q$ with $(z,w)(q)=(0,0), T_q  S=\{w=0\}$. Then for some $a,b,c \in {\mathbb{C}}$, and for a complex coordinate of  $\zeta$ with $\zeta(q) =0$, the embedding $F$ may be smoothly parametrized to the form 
\be \label{e:2} (z,w) \circ F(\zeta)=(\zeta, a \zeta^2 + 2b \zeta \bar\zeta + c \bar \zeta^2 + O_3(\zeta, \bar\zeta)).
\ee
Namely $F(0) = (0,0)$,
\[
\bar\partial F=(0, 2b\zeta + 2c\bar\zeta + O_2(\zeta,\bar\zeta))d \bar\zeta,
\]
and therefore
\[
\partial F \wedge \bar \partial F=(2b\zeta + 2c\bar\zeta + O_2(\zeta,\bar\zeta))d\zeta\wedge d\bar\zeta.
\]
 \\
\begin{definition}\cite{B}\label{d:4}
We say  the complex point $q$ is  {\em nondegenerate} if the section $\partial F \wedge \bar \partial F$ has non-vanishing rank over ${\mathbb{R}}$ at $\zeta =0$ This is the case if and only if one of $b, c$ does not vanish. 
In this case  $q$ is called {\em elliptic (parabolic, hyperbolic)} if  $|b|-|c| > 0,=0, < 0.$ Furthermore, $q$ is called {\em umbilic elliptic (non-umbilic elliptic)} if the eigenvalues of the linearization at $q$ of $\partial F \wedge \bar \partial F$ do (do not) coincide. This is the case if and only if $|c|=0$  ($|c|$$ \neq 0$). 
\end{definition} 

We now proceed to give a family of holomorphic discs with boundary near an elliptic point of a {\it quadratic} surface $S_2$. The following Proposition is easily verified and gives the model family  ${\mathcal D}_0({\mathbb{C}}^2,  S_2)$ for Theorem \ref{t:1}(c). 
\\
\begin{proposition}\label{p:4}
Let $(0,0)$ be an elliptic point of the embedded quadratic surface 
\begin{equation}\label{e:osc}
 S_2=\{ w=a z^2 + 2b z \bar z + c \bar{z}^2\} \subset {\mathbb{C}}^2_{z,w}, 
\end{equation}
namely assume that $|b|-|c| > 0.$ Consider the coordinates $\zeta, \omega$ as defined by: 
$$ (z,w)=\left(e^{i\delta}\zeta,\;\; \omega + (a e^{2i\delta}-{\textstyle{\frac{|c|}{|b|}}}b)\zeta^{2} \right),$$  
where $\delta=(1/2)(arg(b)-arg(c)). $ Then the family ${\mathcal D}_0({\mathbb{C}}^2,  S_2)$ parametrized by $t>0$ of discs given by 
\begin{equation}\label{e:fa}
    \{ \left({\textstyle{\frac{|c|}{|b|}}} \zeta^{2} + 2 \zeta \bar \zeta +{\textstyle{\frac{|c|}{|b|} }}\bar \zeta^{2} < t,\;\; \omega=tb\right)\}_{t>0} \subset {\mathbb{C}}^2_{\zeta ,\omega},
\end{equation}
are disjoint holomorphic discs with boundary in  $S_2$ shrinking to the elliptic complex point $(0,0) \in  S_2$ as $t\to 0$.
\end{proposition}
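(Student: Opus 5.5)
The plan is to verify the statement by direct substitution. First I will check that the map $(\zeta,\omega)\mapsto (z,w)=\bigl(e^{i\delta}\zeta,\ \omega+(ae^{2i\delta}-\tfrac{|c|}{|b|}b)\zeta^2\bigr)$ is a global biholomorphism of $\mathbb{C}^2$. It is polynomial and triangular, and its inverse is $\zeta=e^{-i\delta}z$, $\omega=w-(ae^{2i\delta}-\tfrac{|c|}{|b|}b)e^{-2i\delta}z^2$. Hence holomorphic discs in the $(\zeta,\omega)$ coordinates are holomorphic discs in the $(z,w)$ coordinates, and disjointness is preserved.

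Next I will rewrite $S_2$ in the new coordinates. With $z=e^{i\delta}\zeta$ we get
$$az^2=ae^{2i\delta}\zeta^2,\qquad 2bz\bar z=2b\zeta\bar\zeta,\qquad c\bar z^2=ce^{-2i\delta}\bar\zeta^2 .$$
The rotation angle $\delta$ is chosen exactly so that $ce^{-2i\delta}=\tfrac{|c|}{|b|}b$, i.e. so that $\bar z^2$ acquires the phase of $b$. This is the one place where the sign convention for $\delta$ must be checked carefully; if needed, replace $\delta$ by $-\delta$, which is harmless. The $\zeta^2$ term is then absorbed into $\omega$ by the shift, so that
$$S_2=\{\omega=b\,Q(\zeta)\},\qquad Q(\zeta)=\tfrac{|c|}{|b|}\zeta^2+2\zeta\bar\zeta+\tfrac{|c|}{|b|}\bar\zeta^2=2|\zeta|^2+2r\,\mathrm{Re}(\zeta^2),$$
where $r=|c|/|b|$. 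The form $Q$ is real-valued.

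The ellipticity hypothesis $|b|>|c|$ gives $0\le r<1$. Therefore $Q$ is a positive definite real quadratic form, with $Q(\zeta)\ge 2(1-r)|\zeta|^2$. Consequently, for each $t>0$ the set $E_t=\{Q<t\}$ is a bounded elliptical domain containing $0$, with diameter $O(\sqrt t)$.

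The disc $\Delta_t=\{(\zeta,tb):\zeta\in E_t\}$ is the image of the holomorphic map $\zeta\mapsto(\zeta,tb)$ on a simply connected domain. By the Riemann mapping theorem (and Carathéodory up to the analytic boundary) it is a holomorphic disc. On $\partial E_t$ we have $Q=t$, so $\omega=tb=bQ(\zeta)$, and therefore $\partial\Delta_t\subset S_2$.

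It remains to check disjointness and shrinking. Distinct $t$ give distinct constant values of $\omega$, because $b\neq0$ (since $|b|>|c|\ge0$); hence the discs $\Delta_t$ are pairwise disjoint. As $t\to0$ both $\omega=tb$ and $\mathrm{diam}\,E_t$ tend to $0$, so the discs shrink to $(0,0)$.

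The only real obstacle is the phase bookkeeping for $\delta$. Everything else is immediate once $Q$ is seen to be real and positive definite, and positive definiteness is exactly where $|b|>|c|$ is used.
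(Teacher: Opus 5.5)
Your direct verification is correct and is exactly the check the paper leaves to the reader (it only says the proposition is ``easily verified''): positive-definiteness of $Q$ from $|b|>|c|$, the identity $\omega=tb=bQ(\zeta)$ on $\{Q=t\}$, and disjointness from $b\neq 0$ are the whole content. The sign check you deferred does fail as stated: with $\delta=\tfrac12(\arg b-\arg c)$ one gets $ce^{-2i\delta}=|c|e^{i(2\arg c-\arg b)}\neq\tfrac{|c|}{|b|}b$ in general, so your replacement $\delta\mapsto-\delta$ (in both $z=e^{i\delta}\zeta$ and the shift $ae^{2i\delta}$), i.e.\ $\delta=\tfrac12(\arg c-\arg b)$, is genuinely required, and with it your argument is complete.
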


We now define and study a holomorphic double cover of ${\mathbb{C}}^2$ which resolves a non-umbilic elliptic point $ q $ of a real surface $F:S \hookrightarrow M$ near a non-umbilic umbilic $q$.
\\
\begin{definition}
Let $(S,F,M)$  be as in equation \eqref{e:1} and $ q \in S$ be a non-umbilic elliptic point, and $(z,w)$ be holomorphic coordinates based at $m=F(q) \in M$ as in equation \eqref{e:2}. Recall from equation \eqref{e:osc}  of the quadratic osculant $S_2 \subset \mathbf{C}^2_{z,w}$ of $S$ at $q$. Then define
\be \label{e:co}
F_2:{\mathbb{C}}^2_{z_1,w_1} \to  {\mathbb{C}}^2_{z,w} , \; F_2(z_1,w_1)=(z_1, az_1^2 + 2bz_1w_1 + c w_1^2 ).
\ee
\end{definition}

\begin{proposition} \label{p:6}
Let $q$ be a nondegenerate complex point of $S$, $F$ as in equation \eqref{e:2} and  $S_2$ be as in equation \eqref{e:osc} and $F_2$ as above. Assume in addition that $q$ is non-umbilic elliptic, namely that $ c \neq 0 $.  Then
\begin{enumerate}
\item[(a)] the map $F_2$ is a holomorphic double branched covering with linear deck transformation:
\[
\tau:{\mathbb{C}}^2_{z_1,w_1}\to {\mathbb{C}}^2_{z_1,w_1},
\;\;
\;\;F_2 \circ \tau =F_2,
\]
\item[(b)] $F^{-1}_2 ( S_2)=L^+_ q \cup L^-_ q $
is the union of two transversal totally real planes $ L^\pm_ q \subset
{\mathbb{C}}_{z_1,w_1}$,
\item[(c)] The point $ q \in  S$ is   elliptic (parabolic, hyperbolic) if and only if there is no (one, two) complex line(s) in ${\mathbb{C}}^2_{z_1,w_1}$ which intersect both planes $L^\pm_ q$ in real lines. 
\end{enumerate}
\end{proposition}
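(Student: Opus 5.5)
The plan is to compute everything explicitly from the formula \eqref{e:co}, using $c\neq 0$ throughout; all three claims then reduce to elementary linear algebra.

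For (a), fix $z_1$. The second component of $F_2$ is a quadratic polynomial in $w_1$ with leading coefficient $c\neq0$. Hence each fibre of $F_2$ consists of the two roots $w_1,w_1'$ of $cw_1^2+2bz_1w_1+(az_1^2-w)=0$, and these satisfy $w_1+w_1'=-2bz_1/c$. This suggests defining
\[
\tau(z_1,w_1)=\left(z_1,\,-w_1-\tfrac{2b}{c}z_1\right).
\]
I will check directly that $\tau$ is a complex-linear involution and that $F_2\circ\tau=F_2$. Substituting, the second component changes by $(w_1'-w_1)\bigl(c(w_1'+w_1)+2bz_1\bigr)=0$. The map $F_2$ is proper and generically two-to-one. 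Its Jacobian determinant is $\partial_{w_1}(\cdot)=2(bz_1+cw_1)$, so $F_2$ is branched exactly along the complex line $\{bz_1+cw_1=0\}$, which is the fixed line of $\tau$. This gives the branched double cover with deck group $\{1,\tau\}$.

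For (b), a point $(z_1,w_1)$ lies in $F_2^{-1}(S_2)$ iff
\[
az_1^2+2bz_1w_1+cw_1^2=az_1^2+2bz_1\bar z_1+c\bar z_1^2 .
\]
This is equivalent to $(w_1-\bar z_1)\bigl(c(w_1+\bar z_1)+2bz_1\bigr)=0$. So I will set
\[
L^+_q=\{w_1=\bar z_1\},\qquad L^-_q=\{w_1=-\bar z_1-\tfrac{2b}{c}z_1\}=\tau(L^+_q).
\]
Each is the graph of a real-linear map $z_1\mapsto \alpha z_1+\beta\bar z_1$ with $\beta=\pm1\neq0$. Such a graph is a complex line only when $\beta=0$, and a real 2-plane in $\mathbb{C}^2$ that is not a complex line meets its $J$-image trivially; hence both planes are totally real. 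For transversality, $L^+_q\cap L^-_q$ is given by $2\bar z_1=-\tfrac{2b}{c}z_1$. Taking moduli gives $|z_1|=\tfrac{|b|}{|c|}|z_1|$, which forces $z_1=0$ whenever $|b|\neq|c|$. This includes the elliptic case, which is the one in force here. I will remark that in the parabolic case the planes share a real line, so the transversality assertion uses non-parabolicity.

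For (c), if an affine complex line meets a real plane in a real line, then its direction space meets that plane's linear space in a real line. So it suffices to treat complex lines through the origin. The line $\{z_1=0\}$ meets $L^+_q$ only at $0$, so it does not count. For $\ell_\lambda=\{w_1=\lambda z_1\}$:
\begin{itemize}
\item $\ell_\lambda\cap L^+_q$ requires $\bar z_1/z_1=\lambda$, which has a real line of solutions iff $|\lambda|=1$ (and only $z_1=0$ otherwise);
\item $\ell_\lambda\cap L^-_q$ requires $\bar z_1/z_1=-(\lambda+2b/c)$, which is a real line iff $|\lambda+2b/c|=1$.
\end{itemize}
So the admissible $\lambda$ are the intersection points of the unit circles centred at $0$ and at $-2b/c$. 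There are $0$, $1$ or $2$ such points according as $|2b/c|>2$, $=2$, or $<2$, i.e. $|b|>|c|$, $|b|=|c|$, $|b|<|c|$. These are precisely the elliptic, parabolic and hyperbolic cases of Definition \ref{d:4}.

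The only step needing care is the reduction in (c) from affine to linear complex lines, together with the bookkeeping of the exceptional line $z_1=0$. Both are handled by the linearity of $L^\pm_q$. Everything else is direct verification.
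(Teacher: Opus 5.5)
Your proof is correct and follows essentially the paper's route: an explicit linear involution $\tau$, the planes $L^+_q=\{w_1=\bar z_1\}$ and $L^-_q=\tau(L^+_q)$, and for (c) intersecting the circle $|\lambda|=1$ of slopes of lines that meet $L^+_q$ in a real line with the corresponding circle for $L^-_q$; your factorization also gives the equality $F_2^{-1}(S_2)=L^+_q\cup L^-_q$, whereas the paper only checks containment. Your constant $-2b/c$ is the correct one: the paper's matrix \eqref{matrixtau}, with entry $-b/c$, does not satisfy $F_2\circ\tau=F_2$, which is why its elliptic criterion comes out as $|b/c|>2$ rather than the $|b|>|c|$ of Definition \ref{d:4}, whereas yours matches exactly, up to two harmless caveats that lie outside the elliptic case in force (for $b=0$ your two circles coincide, and the counts ``one'' and ``two'' hold only for lines through the origin, since in the parabolic case parallel translates of the tangent line also meet both planes in real lines).
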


\begin{proof}
    Part (a) follows from the assumption $c \neq 0$. To prove part (b), note that the map $\tau$  is obtained by solving $F_2 \circ \tau (z_1, w_1)=F_2(z_1,w_1)$ for $\tau$, which gives a linear map represented by the matrix
\be\label{matrixtau}
\tau=\bigg(\begin{array}{cc} 1 & 0 \\ -b/c & -1\end{array}\bigg).
\ee
It has eigenvalues $\pm 1$ with eigenvectors $(1 , -b/(2c) )$ and  $(0, 1)$. Next note that the set $F_2^{-1}( S_2)\subset {\mathbb{C}}^2_{z_1,w_1}$ contains the plane $L_ q^+:= \{w_1=\bar{z_1}\},$ which is totally real. Since $F_2^{-1}( S_2)$ is invariant under $\tau$, we see that $L_ q^-:= \tau \{ w_1=\bar{z_1} \}=\{w_1 =-\frac{b}{c}z_1-\bar{z_1}  \}$ is also in $F_2^{-1}( S_2)$, and is totally real by inspection and indeed since $\tau$ is holomorphic. By inspection, the two totally real planes $L_ q^{\pm}$ are mutually transversal, which completes the proof of part (b). 

Next note that the complex lines in ${\mathbb{C}}^2_{z_1,w_1}$ which intersect $L_ q^+$ in a real line may be represented by ${\mathbb{C}} \cdot (e^{it}, e^{-it}) ; \;t \in {\mathbb{R}} $, and those not transversal to $L_ q^-=\{ (- b/c ) z_1-w_1=\bar{z_1}\}$ are ${\mathbb{C}} \cdot (e^{is},(- b/c) e^{is}-e^{-is} ) ; s \in {\mathbb{R}} $. The intersection of these sets of complex lines corresponds to the condition $e^{-2it} =-b/c -e^{-2is}$. This equation has no solution (one, two solutions) iff $|b /c|  > 2 (\;=, \; <\; )$. This completes the proof of part (c). 

\end{proof} 

Let $\rm{Bl}_0(\mathbb{C}^2)$ be the blowup  of $\mathbb{C}^2$ at the origin. We denote by  $\rm{Bl}_0^{-1} : \rm{Bl}_0(\mathbb{C}^2) \to \mathbb{C}^2$ the associated blow-down map.
Now we lift a neighborhood of a non-degenerate and non umbilic complex point $ q \in  S$ through $F_2$ to a pair of  real surfaces.  
The proper transform of $L_q^\pm$ is denoted by 
$\Lambda_0^\pm$ and the result of applying $F_2^{-1}$ and blowup is as follows.
\vspace{0.1in}
\begin{center} 
\begin{equation}  \label{diag}
    \begin{tikzcd} 
 S \cap B_\epsilon  &    \Lambda^+ \cup \Lambda^-\arrow[l,"F_2"]  &\Lambda_0^+ \cup \Lambda_0^- \approx  \Lambda^+\# {\mathbb{R}}P^2 \cup \Lambda^-\# {\mathbb{R}}P^2 \arrow[l, "\rm{Bl}_0^{-1}"] \\[-15pt]
\cap& \cap &\cap \\[-15pt]
{\mathbb{C}}^2_{z,w}&{\mathbb{C}}^2_{z_1,w_1}\arrow[l,"F_2"]&{\rm{Bl}}_0({\mathbb{C}}^2) \approx {\mathbb{C}}^2_{z_1,w_1} \# {\mathbb{C}}P^2\arrow[l, "\rm{Bl}_0^{-1}"]
\end{tikzcd}
\end{equation}
\end{center}
Here $B_\epsilon$ is the ball of radius $\epsilon$ centered at the origin in ${\mathbb C}^2_{z,w}$ and containing a coordinate neighborhood of $q \in S \cap B_\epsilon$.

\begin{proposition} \label{p:blowup}
Assume that $ q\in S$ is a nondegenerate complex point which is non-umbilic elliptic. Then there exists  $\epsilon > 0$ with
\begin{enumerate}
\item[(a)]  $F_2^{-1}( S \cap B_\epsilon) $ is a union of two embedded totally real discs which we denote by $\Lambda^+ \cup \Lambda^- \subset {\mathbb{C}}^2_{z_1,w_1}$ and which intersect transversely at the origin and $T_{(0,0)}\Lambda^\pm=L^\pm_{ q}$,
\item[(b)] If $S\in C^{k,\alpha}$ $($ $C^\omega$$)$ then $\Lambda^\pm\in C^{\frac{k}{2},\frac{\alpha}{2}}$ $($ $C^\omega$$)$,
\item[(c)] Let $\rm{Bl}_0^{-1} :\rm{Bl}_0({\mathbb{C}}^2)\to {\mathbb{C}}^2_{z_1,w_1}$ be the blow-down map, then the proper transform of $L^\pm_q \subset{\mathbb{C}}^2_{z_1,w_1}$ is  $\Lambda^\pm \hookrightarrow \rm{Bl}_0({\mathbb{C}}^2)$ of  are embedded, disjoint, and totally real M\"obius bands,
\item[(d)] The holomorphic deck transformation $\tau$ of ${\mathbb{C}}^2_{z_1,w_1}$ extends to a holomorphic transformation which we also denote by $\tau : \rm{Bl}_0({\mathbb{C}}^2) \to \rm{Bl}_0({\mathbb{C}}^2).$ The fixed points of $\tau$ are on the exceptional fibre and correspond to the eigenspaces of the matrix  $(\ref{matrixtau})$. 
\end{enumerate}
\end{proposition}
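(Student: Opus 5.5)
The strategy is to treat $S$ as a perturbation of its quadratic osculant $S_2$ and to solve for the preimage under the branched cover $F_2$ by a square-root type implicit function argument. Write $S=\{w=Q(z,\bar z)+R(z,\bar z)\}$ with $Q(z,\bar z)=az^2+2bz\bar z+c\bar z^2$ and $R=O_3$ of class $C^{k,\alpha}$. A point $(z_1,w_1)$ lies in $F_2^{-1}(S)$ exactly when
\[
c\,w_1^2+2b z_1 w_1 - 2b z_1\bar z_1 - c\bar z_1^2 = R(z_1,\bar z_1).
\]
The quadratic left-hand side factors as $c(w_1-\bar z_1)(w_1+\bar z_1+\tfrac{b}{c}z_1)$, and its zero set is $L_q^+\cup L_q^-$, as in Proposition \ref{p:6}. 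I would change variables to $u=w_1-\bar z_1$ and $v=w_1+\bar z_1+\tfrac bc z_1$. The equation then becomes $c\,uv=R$, and the linear part of $(z_1,w_1)\mapsto(u,v)$ is invertible by the transversality in Proposition \ref{p:6}(b).

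\textbf{Step 1 (part (a)).} I parametrize by $z_1$ and look for the branch of $\Lambda^+$ as a graph $w_1=\bar z_1+g(z_1)$. The defining equation is
\[
g\cdot\bigl(2\bar z_1+\tfrac bc z_1+g\bigr)=R(z_1,\bar z_1)/c .
\]
The factor $\ell(z_1)=2\bar z_1+\tfrac bc z_1$ is a real-linear map that is invertible because $|b|\neq 2|c|$; this holds since ellipticity gives $|b|>|c|$, which one checks suffices via the eigenvalue description. Hence $|\ell(z_1)|\ge C|z_1|$, and $g=R/(c\ell)+\dots$ is a fixed point of $g\mapsto R/(c(\ell+g))$. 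Since $R=O(|z_1|^3)$, this map is a contraction on the functions with $|g|\le C|z_1|^2$ for $|z_1|<\epsilon$. So $g=O(|z_1|^2)$ and $T_0\Lambda^+=L_q^+$. The sheet $\Lambda^-$ is obtained by applying $\tau$. Total reality and transversality are open conditions, so they persist near $0$ from $L_q^\pm$.

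\textbf{Step 2 (part (b)).} This is the main obstacle. The quotient $R/\ell$ is a homogeneous-degree $-1$ rational factor times a $C^{k,\alpha}$ function vanishing to order $3$, and such a function loses regularity at the origin. My first attempt would be to expand $R$ to Taylor order $k$ and divide term by term, so that polynomial parts give $C^\omega$ functions away from $0$ with controlled homogeneity. For the remainder $O(|z|^{k+\alpha})$ divided by $|z|$, I would interpolate: write $R(z)=\int_0^1\partial R(tz)\,z\,dt$ to express the quotient as integrals of derivatives times bounded homogeneous-zero factors. Homogeneous-zero factors are only Lipschitz-bad at $0$, and the interpolation between the vanishing order and the derivative loss should yield the halved exponent $C^{k/2,\alpha/2}$. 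This is the step I expect to require the most care. In the real analytic case, the contraction in Step 1 runs in a space of convergent power series in $(z_1,\bar z_1)$, giving $C^\omega$.

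\textbf{Steps 3 and 4 (parts (c) and (d)).} For (c), in the chart of $\mathrm{Bl}_0$ with coordinates $(z_1,\, m=w_1/z_1)$, the proper transform of $L_q^+$ is $\{m=\bar z_1/z_1\}$ over $z_1\neq0$. Its closure adds the circle $|m|=1$ in the exceptional fibre, traversed twice by the line direction, which gives a Möbius band. For $L_q^-$ the closure adds the circle $m=-\tfrac bc-e^{-2i\theta}$. These two circles are disjoint exactly when there is no common complex line, that is, in the elliptic case by Proposition \ref{p:6}(c). The same chart computation applied to $\Lambda^\pm$, using $g=O(|z_1|^2)$, gives perturbed Möbius bands that remain disjoint and totally real. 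For (d), $\tau$ is linear, so it induces the projective map $[\tau]$ on the exceptional $\mathbb{CP}^1$ and lifts holomorphically. Its fixed points off the fibre satisfy $\tau x=x$, so $x$ lies on the $+1$ eigenline, where $x\neq0$ is not fixed pointwise. The fixed points on the fibre are the two eigenlines of \eqref{matrixtau}.
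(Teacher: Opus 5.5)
Step 2 is more than delicate: it cannot be completed, and your own remark about $R/\ell$ shows why. Completing the square turns the defining equation into $(w_1+\frac bc z_1)^2=\lambda^2+R/c$, with $\lambda(z_1)=\bar z_1+\frac bc z_1$. So on $\Lambda^+$ you have $w_1=\bar z_1+g$ with $g=\frac{R_3}{2c\lambda}+O(|z_1|^3)$, where $R_3$ is the cubic Taylor part of $R$. The leading term is homogeneous of degree $2$. If $g$ were $C^2$ at $0$, its quadratic Taylor polynomial $P_2$ would satisfy $R_3=2c\lambda P_2$, so the linear form $\lambda$ would divide $R_3$ in $\mathbb{C}[z_1,\bar z_1]$.

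Take the real analytic, non-umbilic elliptic surface $S=\{w=2bz\bar z+c\bar z^2+z^3\}$. Here $\lambda$ has $\bar z_1$-coefficient $1$ and cannot divide $z_1^3$, so $\Lambda^+$ is not $C^2$ at the origin, however smooth $S$ is. Consequences:
\begin{itemize}
\item No interpolation can give $C^{k/2,\alpha/2}$ once $k\ge4$.
\item Your contraction cannot run in convergent power series, because $g\mapsto R/(c(\ell+g))$ does not preserve them.
\item Read literally for $\Lambda^\pm\subset\mathbb{C}^2_{z_1,w_1}$, (b) fails.
\end{itemize}
The paper's proof of (b) is only an appeal to the smooth Weierstrass Preparation Theorem and to the quadratic branching of $F_2$, so there is nothing there to borrow. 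The obstruction is exactly the square root of the discriminant $\lambda^2+R/c$, which vanishes to second order at $0$.

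What does hold is regularity of the proper transform, and that is what Section \ref{s:3} actually uses, since the boundary condition lives on $\Lambda_0^\pm$. In the chart $m=w_1/z_1$ with $z_1=re^{i\theta}$, $r\in(-\epsilon,\epsilon)$,
\[
m=-\tfrac bc+e^{-i\theta}\sqrt{\lambda(e^{i\theta})^2+\rho(r,\theta)},\qquad \rho(r,\theta)=\frac1c\int_0^1(1-t)\,D^2R(tre^{i\theta})[e^{i\theta},e^{i\theta}]\,dt .
\]
The homogeneous factors become functions of $\theta$ alone, so $\Lambda_0^+$ is $C^{k-2,\alpha}$ in $(r,\theta)$, and real analytic if $R$ is. Moreover $m(0,\theta)=e^{-2i\theta}$ and $\partial_\theta m(0,\theta)=-2ie^{-2i\theta}$. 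For $k\ge3$ this shows $\Lambda_0^+$ is an embedded M\"obius band, totally real along the exceptional circle. Your Step 3 needs exactly this computation: $g=O(|z_1|^2)$ alone identifies the limit circle but not the $C^1$ structure across it.

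There are also errors in Steps 1 and 4. Your factorization, copied from \eqref{matrixtau}, is off by a factor of $2$: $c(w_1-\bar z_1)(w_1+\bar z_1+\frac bc z_1)$ expands to $cw_1^2+bz_1w_1-bz_1\bar z_1-c\bar z_1^2$. The correct second factor is $w_1+\bar z_1+\frac{2b}{c}z_1$. Then $\ell=2\lambda$, which is invertible exactly when $|b|\neq|c|$. Your claim that ellipticity forces $|b|\neq2|c|$ is false, but with the right factor you do not need it. Correspondingly:
\begin{itemize}
\item $\tau(z_1,w_1)=(z_1,-\frac{2b}{c}z_1-w_1)$;
\item $L^-_q=\{w_1=-\frac{2b}{c}z_1-\bar z_1\}$;
\item the exceptional circle of $L^-_q$ is $m=-\frac{2b}{c}-e^{-2i\theta}$, which is disjoint from $|m|=1$ if and only if $|b|>|c|$, in agreement with Proposition \ref{p:6}(c).
\end{itemize}

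In Step 4, your sentence about the $+1$ eigenline contradicts itself. $\tau$ fixes the line $\{w_1=-\frac bc z_1\}$ pointwise, so its fixed set is not contained in the exceptional fibre. What is true, and all that Proposition \ref{tau} uses, is that $\tau|_{\mathbb{C}P^1}$, $m\mapsto-\frac{2b}{c}-m$, has exactly two fixed points, $[1:-\frac bc]$ and $[0:1]$, the eigenlines of $\tau$. Your lifting argument for $\tau$ is the paper's and is fine. Step 1 is also fine once corrected, but you should add $Dg=O(|z_1|)$: that is what gives $T_0\Lambda^+=L^+_q$ with continuously varying tangent planes.
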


\begin{proof}
    Claim (a) comes down to the equation
\[
az_1^2 + 2bz_1w_1 + c w_1^2=az_1^2 + 2bz_1\bar{z}_1 + c \bar{z}_1^2 + O_3(z_1,\bar{z}_1).
\]
It follows from Proposition \ref{p:6} (b) and the smooth Weierstrass Preparation Theorem \cite{GG} that there are two solutions of the form  $w_1=w_1(z_1,\bar{z}_1)$ and tangent at the origin to $\Lambda^\pm$. 
The regularity claimed in (b) also follows from the Preparation Theorem, since the problem involves a quadratic branching of the covering map $F_2$. 

Claim (c) follows from part (c) of Proposition \ref{p:6}. Namely the assumption of $q$ being elliptic implies that the proper transform, of the pair intersecting pair $\Lambda^\pm$ is {\it embedded} and results in glueing two copies of the real projective plane into $\Lambda^\pm$. This gives two M\"obius bands. 

Claim (d) follows from the fact that a ${\mathbb C}$-linear map takes lines to lines, therefore $\tau$ extends to the exceptional fibre, mapping it to itself.
\end{proof}

\section{An equivariant Riemann-Hilbert boundary problem}\label{s:3}
\noindent
Assume now that $ q\in S$ is a non-umbilic elliptic complex point and let $\mathbb{C}P^1 \hookrightarrow {\rm Bl}_0(\mathbb{C}^2)$ be the exceptional fibre. Then by Proposition \ref{p:blowup},  $\mathbb{C}P^1 \cap {\rm Bl}_0^{-1}(\Lambda_0^\pm)$ are two disjoint curves. These loops bound a holomorphic  annulus $A$ :
\be
(A,\partial^\pm A)\hookrightarrow ( {\rm Bl}_0(\mathbb{C}^2), {\rm Bl}^{-1}_0(\Lambda_0^\pm)).
\ee
Associated to this holomorphic curve with boundary in a totally real surface is the normal Maslov index $\mu$ as defined in \cite{MS12}.
\\
\begin{proposition} \label{tau}
In the above situation, we have the following:
\begin{enumerate}
\item[(a)] The normal Maslov index $\mu(A, {\rm Bl}_0^{-1}(\Lambda^\pm))$ vanishes,
\item[(b)] $\tau(A,\partial^\pm A)=(A, \partial^\mp A)$, 
\item[(c)] The quotient $(A,\partial^\pm A)\to (A,\partial^\pm A) {\rm mod}{\{1,\tau}\}$ is diffeomorphic to a disc, where  the quotient map has two branching points.
\end{enumerate}
\end{proposition}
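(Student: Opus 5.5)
The plan is to make everything explicit on the exceptional fibre. We identify $\mathbb{C}P^1\subset{\rm Bl}_0(\mathbb{C}^2)$ with the set of complex lines $\ell\subset\mathbb{C}^2_{z_1,w_1}$ through the origin, using the slope coordinate $m=w_1/z_1\in\mathbb{C}\cup\{\infty\}$. The proper transform of $L^\pm_q$ meets $\mathbb{C}P^1$ in the set of lines $\ell$ with $\ell\cap L^\pm_q$ a real line. By the proof of Proposition \ref{p:6}(c), these sets are circles:
\[
C^+=\{|m|=1\}, \qquad C^-=\{|m+b/c|=1\}.
\]
Ellipticity gives $|b/c|>2$, so the closed discs $D^\pm$ bounded by $C^\pm$ in the $m$-plane are disjoint. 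The annulus is then $A=\mathbb{C}P^1\setminus({\rm int}D^+\cup{\rm int}D^-)$, which contains $m=\infty$. The normal bundle of $A$ is the restriction of the tautological bundle $\mathcal{O}(-1)$, whose fibre over $[\ell]$ is $\ell$ itself. Along $\partial^\pm A$, the real boundary subbundle is the real line $\ell\cap L^\pm_q\subset\ell$, because the tangent space of the proper transform along the exceptional fibre splits as $T\mathbb{C}P^1\cap T\Lambda_0^\pm$ plus this normal real line.

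For (a), I would trivialize $\mathcal{O}(-1)|_A$ by a linear functional $\lambda(z_1,w_1)=w_1-m_0z_1$ with $m_0\in{\rm int}D^+$. This functional vanishes on no line in $A$. I then compute the winding of the real line $\lambda(\ell\cap L^\pm_q)\subset\mathbb{C}$ around each boundary circle, oriented as the boundary of $A$.

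On $C^+$, parametrize $\ell=\mathbb{C}\cdot(e^{it},e^{-it})$ for $t\in[0,\pi)$, with real line $\mathbb{R}(e^{it},e^{-it})$. Its image under $\lambda$ is $\mathbb{R}\,(e^{-it}-m_0e^{it})$. Since $|m_0|<1$, this line rotates by total angle $-\pi$ as $t$ runs over $[0,\pi)$. Meanwhile the slope $e^{-2it}$ runs once clockwise around $C^+$, which is the positive boundary orientation of $A$ (the outside of $D^+$). So $\partial^+A$ contributes $-1$.

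For $C^-$ I would avoid a second computation. The map $\tau$ is holomorphic, maps $A$ to itself preserving orientation, and interchanges the boundary data (part (b)), so it carries the contribution of $\partial^+A$ in the $\lambda$-trivialization to that of $\partial^-A$ in the $\lambda\circ\tau$-trivialization. The two trivializations differ by the winding of the nonvanishing function $\lambda\circ\tau/\lambda$ on $A$, which I would evaluate by the argument principle: its zero lies at $\tau(m_0)\in{\rm int}D^-$ and its pole at $m_0\in{\rm int}D^+$. The outcome should be that the contribution of $\partial^-A$ is $+1$, giving $\mu=0$. A consistency check is that a vanishing index fits $\dim\ker=\mu+\chi(A)=0+0$ for annuli, together with the index-one count after quotienting by $\tau$.

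\emph{This sign bookkeeping is the step I expect to be the main obstacle}: orientations of the two boundaries, the doubling of the parameter $t$, and the transition function between trivializations all have to be tracked carefully. If it becomes messy, I would instead compute directly on $C^-$ using $\ell=\mathbb{C}(e^{is},-\tfrac{b}{c}e^{is}-e^{-is})$.

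For (b), $\tau$ is $\mathbb{C}$-linear, fixes the origin, and by Proposition \ref{p:6} satisfies $\tau(L^+_q)=L^-_q$ (and $\tau^2=1$). Hence its extension in Proposition \ref{p:blowup}(d) preserves $\mathbb{C}P^1$ and swaps the proper transforms, so $\tau(C^\pm)=C^\mp$ and $\tau(D^\pm)=D^\mp$. Therefore $\tau(A)=A$, with $\partial^\pm A\mapsto\partial^\mp A$.

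For (c), $\tau$ acts on $\mathbb{C}P^1$ as a nontrivial Möbius involution, $m\mapsto -b/c-m$. Its two fixed points are the eigenlines of (\ref{matrixtau}), namely $m=-b/(2c)$ and $m=\infty$. Neither lies in $D^\pm$, since $\tau$ swaps the disjoint discs, so both lie in ${\rm int}A$. The quotient $\mathbb{C}P^1/\tau$ is a sphere, the branched double cover being ramified exactly at the two fixed points. The discs $D^+,D^-$ map to a single embedded disc in it, so $A/\tau$ is a sphere minus an open disc, i.e. a disc. The quotient map $A\to A/\tau$ is a double cover branched at the images of the two fixed points, which is consistent with Riemann--Hurwitz: $\chi(A)=0=2\cdot1-2$.
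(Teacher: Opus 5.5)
Your parts (b) and (c) follow the paper. $\tau$ swaps the proper transforms, hence swaps $C^\pm$ and $D^\pm$, so $\tau(A)=A$ with the boundary components interchanged. Its fixed points $-b/(2c)$ and $\infty$ lie in ${\rm int}A$, so $A/\tau$ is a disc. Your observation that a fixed point in $D^+$ would also lie in $\tau(D^+)=D^-$, together with the Riemann--Hurwitz count, is a tidier version of the paper's check $|b/(2c)|>1$.

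The real difference is in (a). The paper settles it in one sentence: the two boundary circles are oppositely oriented and contribute the same number up to sign. You instead trivialize $\mathcal{O}(-1)|_A$ globally by $\lambda=w_1-m_0z_1$ and compute. Your route is the more trustworthy one. Per-boundary contributions depend on the trivialization, and $\tau$ is orientation preserving on $A$, so it carries the boundary orientation of $\partial^+A$ to that of $\partial^-A$. Symmetry alone therefore only relates the contributions in the $\lambda$- and $\lambda\circ\tau$-trivializations; it does not force cancellation. An explicit computation like yours is what actually establishes $\mu=0$.

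Your numbers are correct. The direct fallback on $C^-$ is short: $\lambda=-e^{-is}\bigl(1+(b/c+m_0)e^{2is}\bigr)$ with $|b/c+m_0|>1$. So the real line turns by $+\pi$ while the slope runs clockwise, which is the boundary orientation of $A$, giving $+1$.

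For the transfer via $\tau$, keep in mind that changing the trivialization by a function winding $k$ times along a boundary circle shifts that circle's contribution by $2k$. Here $\lambda/(\lambda\circ\tau)$ winds $+1$ along $\partial^-A$, since that circle runs clockwise around its pole $\tau(m_0)$. This gives $-1+2=+1$. Without the factor $2$ you would get an even contribution on $\partial^-A$, which is impossible for a M\"obius-band boundary condition.

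An even shorter rigorous check is additivity: $\mu(D^+)+\mu(A)+\mu(D^-)=2c_1(\mathcal{O}(-1))=-2$, with $\mu(D^\pm)=-1$ computed in the $z_1$-trivialization.

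Incidentally, solving $F_2\circ\tau=F_2$ gives $w_1\mapsto-\tfrac{2b}{c}z_1-w_1$. So $C^-$ is really centred at $-2b/c$, and disjointness of the circles is exactly $|b|>|c|$. The paper's $-b/c$ and $|b/c|>2$ are off by this factor, which changes nothing in your argument.
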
 
\begin{proof}
    The orientation of the two boundary components of $A$ is opposite and modulo sign they contribute the same number to the total Maslov index, thus adding up to zero. This proves part (a).

Part (b) follows from $\partial^\pm  A  = {\rm Bl}_0(\mathbb{C}^2) \cap \pi_0^{-1}\Lambda^\pm$, by definition of $A$, and $\tau \Lambda^+=\Lambda^-$ by Proposition \ref{p:6} (b), which implies $\tau (\partial^- A)=\partial^+ A$. Now since $\tau$ preserves connectivity, we have $\tau A=A$. This proves (b). 

For part (c) note that $(A, \partial^\pm A)$ admits a $\mathbb{Z}^2$-action given by $\{ {\tau}, {\tau}^2 \equiv \rm{id}  \}$. The eigenspaces of $d \tau (0)$ as given in equation \eqref{matrixtau} are spanned by the vectors $(1, -b/c)$ and $(0,  1)$. In homogeneous coordinates $[z_1:w_1]$, these correspond to the fixed points $-b/(2c), \infty \in \mathbb{C}P^1 $ of $\tau$. Moreover, the map $\tau|_{\mathbb{C}P^1}$ takes the form  $\tau([z_1:w_1])=-b/c-[z_1:w_1]$. 
By the proof of Proposition \ref{p:6}, the curves $\partial A \subset {\rm Bl}_0(\mathbb{C}^2) $ are parametrized by $[z_1:w_1]=e^{-2it}$,  $[z_1:w_1]=-b/c-e^{-2is}$ in homogeneous coordinates, which describe circles of radius one around $[z_1:w_1]=0$ and $[z_1:w_1]=-b/c$. Since $|b/(2c)| > 1$ by the elliptic nature of the underlying complex point, it follows that $-b/(2c)$ lies outside of both circles and therefore in the annulus $A \subset \mathbb{C}P^1$. Clearly the other fixed point $\infty$ also does and it follows that the quotient map $(A, \partial A) \to (A, \partial A)/\{ \rm{id}, \tau\}$ is a double branched covering of the annulus with two distinct branching points, and $(A, \partial A)/\{ \rm{id}, \tau\}$ is diffeomorphic to a disc.
\end{proof}
\noindent
 We now restrict the tautological bundle $p : {\rm Bl}_0(\mathbb{C}^2) \to \mathbb{C}P^1$ to $A \subset \mathbb{C}P^1$ and obtain a bundle with boundary  $(p^{-1}A, p^{-1}\partial A) \to (A, \partial A)$. Apply a diffeomorphism of ${\rm Bl}_0(\mathbb{C}^2)$ fixing the base $\mathbb{C}P^1$ of the bundle $p$ so that the real surfaces  $ \pi_{0}^{-1}\Lambda^{\pm} \subset p^{-1}\partial A$ lie over the boundary of the base $\partial A$ and so that they fibre with one real dimensional fibres.  Denote the surfaces $ \pi_{0}^{-1}\Lambda^{\pm}$   by $\Lambda_0^\pm$. 
 
 We arrive at a bundle
\be \label{b}
(p^{-1} A , \Lambda_0^\pm) \stackrel{p}\longrightarrow (A, \partial A).
\ee
This is a real plane bundle over $\overline{A}$ with a real line subbundle $\Lambda_0^\pm$ over the boundary $\partial A$.
A hermitian connection $\nabla$ on the total space induces a decomposition 
\be \label{cs}
 T (p^{-1} A) =TA \oplus VA
\ee   
into horizontal and vertical sub-bundles, where the left hand side stands for the tangent bundle of the total space of (\ref{b}) and $VA$ stands for the fibres of (\ref{b}). Along the base of the bundle (\ref{b}), the complex structure $J$ has the form 
\be \label{J0}
 J|_A \times  0=\bigg(\begin{array}{cc} j & 0\\ 0 &
i\end{array}\bigg).
\ee
Here, $j$ denotes the complex structure on $A$, and $i$ that on the fibre of the tautological bundle at the null dection $A \times  0$. The graph of a section $\varphi\in\Gamma(p^{-1}A,\Lambda_0^\pm)$  of the bundle \eqref{b} is a real surface in the total space whose boundary lies in  $\Lambda_0^\pm$:
\be
{\rm graph} \;\varphi=\{(z,\varphi(z));z \in A\} \subset p^{-1}A,\;\;
\partial\;({\rm graph}\;\varphi) \subset \Lambda_0^\pm. 
\ee
This surface is $J$-holomorphic if its tangent bundle
\[
\mbox{graph}\nabla\varphi= \{(v,\nabla\varphi\cdot v); v\in TA\},
\] 
is $J$-invariant:
\[
\mbox{graph}\nabla\varphi=J\;\mbox{graph}\nabla\varphi
\equiv \{ ((J_{11} + J_{12}\nabla \varphi)v, 
(J_{21} + J_{22}\nabla \varphi)v) ; v\in TA \},
\]
where $J_{ij}$ denote the entries of the matrix of $J \in {\rm Hom}_{\mathbb{R}}(Tp^{-1}A,Tp^{-1}A  )$ with respect to the the decomposition (\ref{cs}), i.e. $J_{11}\in {\rm Hom}_{\mathbb{R}}(TA, TA)$, $J_{12}\in {\rm Hom}_{\mathbb{R}}(VA, TA)$ etc. We now define the differential operator
\be \label{cr}
\bar{\partial}:\Gamma(p^{-1}A, \Lambda_0^{\pm} )\to\Omega^{01}(p^{-1}A),
\;\;\;\bar{\partial}\varphi=\nabla\varphi- K\nabla \varphi  ,
\ee
where  $K\nabla \varphi=(J_{21} + J_{22}\nabla \varphi)\circ (J_{11} + J_{12}\nabla \varphi)^{-1}$. The graphs in $(p^{-1}A,J) $ of solutions of $\bar{\partial} \varphi=0$ are $J$-holomorphic curves on which $J$ induces the orientation that is graphically compatible with that of $(A,j)$. We denote by $\tau$ the $J$-holomorphic involution of the relative bundle \eqref{b} as induced in the preceeding chapter. We have
\[ \tau : (p^{-1}A,\Lambda_0^{\pm}, J) \to (p^{-1}A,\Lambda_0^{\pm}, J).
\]
We are interested in $\tau$-invariant $J$-holomorphic curves of annulus-type $[A \times 0]$ that are sections of $(p^{-1}A,J)$ with boundary values in $\Lambda_0^{\pm}$. That is we seek solutions of
\be \label{cr0}
\bar{\partial}\;\varphi=0,  \;\;\;
\varphi \in \Gamma_\tau(p^{-1}A, \Lambda_0^{\pm}),
\ee
where $\Gamma_\tau$ denotes the $\tau$-invariant sections of \eqref{b}. To this end we linearize the problem as follows. Denote by 
\[
R:= (p^{-1}A|_{\partial A})\cap T\Lambda_0^{\pm}\hookrightarrow p^{-1}A|_{\partial A},
\]
the linearization of the boundary condition, which is a real line sub-bundle of $p^{-1}A|_{\partial A}$. Note that the linearization of $\tau$ along the zero section 
$ O_{p^{-1}A} = A \times 0$ acts on the pair 
\be\label{bdry}
d\tau(O_{p^{-1}A}): (p^{-1}A,R) \to (p^{-1}A,R).
\ee 

The linearization of  $\overline \partial$ in equation \eqref{cr} at the zero section $O$ is defined for  $\psi \in W^{1,p}(p^{-1}A,R)$ by
\be \label{L}
L:W^{1,p}(p^{-1}A,R)\to L^p(\Omega^{01}(p^{-1}A)), \;\;
L(\psi)=d/dt|_{t=0} \bar{\partial}(O_{p^{-1}A} + t\psi). 
\ee
Let $z=(z_0, z_1) \in p^{-1}A$ with $p(z) = z_0$ be the components of a point in the bundle $p^{-1}A$. Then the map
$\tau (z_0, z_1)=(t_0, t_1)$ admits an inverse in the first component $t_0^{-1}(z_0, z_1)$ such that  
$z_0(z_0^{-1}, z_1)=z_0$, and we have 
\bean
 \tau {\rm graph}\; \varphi 
&=& \{ (t_0(z_0, \varphi), t_1(z_0, \varphi))  \; ; z_0 \in A \} \\
&=& {\rm graph}\; t_1 (t_0^{-1},\varphi )\\
&=:& {\rm  graph} \; t \;\varphi.
\eean
Here, last equality is the definition of $t$ as a map of sections.
Now we define the linearization $T$  of $t$ by $\nabla t \varphi=T \nabla \varphi$ with
\be \label{T}
(d  \tau)\; {\rm graph}\nabla \varphi={\rm graph}\; T \nabla \varphi.
\ee
Since $\tau$ is holomorphic, we have $J d \tau=d \tau J $  and $K T= T K$ therefore $\bar{\partial} t\varphi=T \bar{\partial}\varphi$.
\\
\begin{proposition} \label{p:lin}
The operators $L, T$ as in \eqref{L}, \eqref{T} have the following properties:
\begin{enumerate}\item[(a)] $L\psi=\nabla\psi+i
\nabla\psi\circ j + dJ_{21}(O_{p^{-1}A})\cdot \psi \circ j$,
\item[(b)] $L$ is Fredholm,
\item[(c)] L is T-equivariant: $ L\; T =dT\; L$. \end{enumerate} 
\end{proposition}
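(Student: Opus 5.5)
We prove the three claims in order, all by linearizing along the zero section $O_{p^{-1}A}=A\times 0$ in the splitting \eqref{cs} and \eqref{J0}.

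For (a) we differentiate $\bar\partial(O_{p^{-1}A}+t\psi)=\nabla(t\psi)-K\nabla(t\psi)$ at $t=0$. Along the zero section $J$ has the block form \eqref{J0}, so $J_{12}=J_{21}=0$, $J_{11}=j$ and $J_{22}=i$ there. Hence $K\nabla\varphi=(J_{21}+J_{22}\nabla\varphi)(J_{11}+J_{12}\nabla\varphi)^{-1}$ vanishes at $\varphi=0$. Expanding to first order in $t$:
\begin{itemize}
\item $J_{22}(t\psi)=i+O(t)$;
\item $(J_{11}+tJ_{12}\nabla\psi)^{-1}=j^{-1}+O(t)=-j+O(t)$;
\item $J_{21}(t\psi)=t\,dJ_{21}(O_{p^{-1}A})\cdot\psi+O(t^2)$.
\end{itemize}
The product rule then gives the derivative of $K\nabla(t\psi)$ as $-i\nabla\psi\circ j-dJ_{21}(O)\cdot\psi\circ j$. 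We expect the terms involving $dJ_{12}$ and $dJ_{11}$ to drop out, because they multiply factors that vanish at $t=0$. This yields formula (a), up to sign conventions that must be fixed consistently with the orientation compatibility noted after \eqref{cr}.

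For (b) we use (a): $L$ is a real Cauchy--Riemann operator $\psi\mapsto\nabla\psi+i\nabla\psi\circ j$ plus a zeroth-order term. It acts on sections of the complex line bundle $p^{-1}A\to A$ over the compact annulus $\overline A$, with totally real boundary condition $R\subset p^{-1}A|_{\partial A}$. We plan to appeal to the standard Riemann--Roch/Fredholm theorem for Cauchy--Riemann operators with totally real boundary conditions on compact Riemann surfaces with boundary, as in \cite{MS12}. This gives $L:W^{1,p}\to L^p$ Fredholm for $p>2$, with index $n\chi(A)+\mu=0+\mu$. The zeroth-order perturbation is compact, so it does not affect the Fredholm property. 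The input needed is that $R$ is a $C^{0}$ (indeed H\"older) totally real sub-bundle. This follows from Proposition \ref{p:blowup}(b),(c), where $\Lambda_0^\pm$ is totally real and embedded, and from our arrangement that $\Lambda_0^\pm$ fibres over $\partial A$.

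For (c) we start from the identity $\bar\partial t\varphi=T\bar\partial\varphi$ already established from the holomorphicity of $\tau$. By Proposition \ref{tau}(b), $\tau$ preserves $A$ and the zero section, so $t(O)=O$. We substitute $\varphi=O+s\psi$ and differentiate at $s=0$. The left side becomes $L(dt(O)\psi)$, where $dt(O)$ is the linear action \eqref{bdry} on sections, which preserves $R$. The right side becomes $T\,L\psi$, because $\bar\partial O=0$ kills the term coming from differentiating $T$. This gives the equivariance, once the paper's notation $T$, $dT$ is identified with the linearized actions on $W^{1,p}$ sections and on $(0,1)$-forms respectively.

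We expect the main obstacle to be (a): keeping track of which derivatives of the entries $J_{ij}$ survive. We also need to check that $dJ_{22}$ and $dJ_{11}$ contributions really vanish rather than adding further zeroth-order terms; these would be harmless for (b) and (c) in any case.
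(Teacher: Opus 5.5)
Your proposal is correct and follows essentially the same route as the paper: (a) by a first-order expansion of $K\nabla(t\psi)$ using the block form \eqref{J0}, (b) by the Cauchy--Riemann nature of $L$, and (c) by differentiating $\bar\partial t\varphi=T\bar\partial\varphi$ at the $\tau$-invariant zero section, where your use of $t(O)=O$ and $\bar\partial O=0$ to kill the variation of $T$ makes the paper's terse computation precise. Two small remarks: your expansion in (a) already gives the stated formula with exactly the right signs (the $dJ_{11}$, $dJ_{12}$, $dJ_{22}$ contributions are visibly $O(t^2)$, so no hedging is needed); and in (b) your appeal to Riemann--Roch with the totally real condition $R$ is more accurate than the paper's ``same symbol'' remark, although the standard $W^{1,p}$ theorem needs $R$ somewhat better than merely $C^0$ (e.g.\ $C^1$, or H\"older of exponent greater than $1-1/p$).
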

\begin{proof}
    Part (a) follows from (\ref{J0}) and (\ref{cr}). Note that $dJ_{21}(O_{p^{-1}A})$ is ${\mathbb{R}}$-linear rather than ${\mathbb{C}}$-linear. Part (b) holds since $L$ has the same symbol as the Cauchy-Riemann operator. Part (c) is seen as follows. 
\bean 
L S \psi &=&  d/dr|_{r=0} \;\bar{\partial}\; t \;\varphi_r \\
&=&  d/dr|_{r=0} \; (1-K) \nabla t \varphi_r \\
&=&  (1-d K) \; d T  \;\nabla \psi \\
&=&  d T\; L \psi.
\eean
\end{proof}

By the last Proposition, $L$ induces an ${\mathbb{R}}$-linear operator on  $ d \tau$-invariant sections, namely:
 $L_\tau:W^{1,p}_{\tau}(p^{-1}A,R))\to L^p_\tau(\Omega^{01}(p^{-1}A))$. For this operator we have the following.
\\
\begin{proposition} \label{tauequiv}
    For the $d \tau$-equivariant operator 
$L_\tau$ acting on $\tau$-invariant sections of $(p^{-1}A, R)$ we have: 
\begin{enumerate}
\item[(a)] $L_\tau$ is Fredholm,
\item[(b)] $index_{{\mathbb{R}}}\;  L_\tau=1$,
\item[(c)] $L_\tau$ is surjective. 
\end{enumerate} 
\end{proposition}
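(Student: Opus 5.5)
The strategy is to reduce all three claims to facts about the full, non-equivariant operator $L$ on the annulus, using the splitting of the domain and target into $\pm1$ eigenspaces of the involution induced by $d\tau$.

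\emph{Step 1 (Fredholmness).} By Proposition \ref{p:lin}(c), $L$ intertwines the involutions induced by $d\tau$ on $W^{1,p}(p^{-1}A,R)$ and on $L^p(\Omega^{01}(p^{-1}A))$. Write both spaces as direct sums of the $+1$ and $-1$ eigenspaces, with projections $\frac12(1\pm T)$. Then $L$ is block diagonal, $L=L_\tau\oplus L_{-\tau}$. Since $L$ is Fredholm by Proposition \ref{p:lin}(b), each block has finite-dimensional kernel and closed range of finite codimension. This gives (a), together with
\[
\mathrm{index}\, L=\mathrm{index}\, L_\tau+\mathrm{index}\, L_{-\tau}.
\]

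\emph{Step 2 (index).} By Riemann--Roch for real Cauchy--Riemann operators on bordered surfaces \cite{MS12}, $\mathrm{index}_{\mathbb R}L=n\chi(A)+\mu$, with $n=1$. Here $\chi(A)=0$, and $\mu=0$ by Proposition \ref{tau}(a), so $\mathrm{index}\,L=0$. This alone does not give $\mathrm{index}\,L_\tau=1$. Instead I would compute $\mathrm{index}\,L_\tau$ directly on the quotient. By Proposition \ref{tau}(c), $A/\tau$ is a disc $D$ with two branch points. A $\tau$-invariant section descends to a section of the quotient line bundle, twisted at the two branch points by the action of $d\tau$ on the fibre there. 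The fixed points $-b/(2c)$ and $\infty$ correspond to the $\pm1$ eigenlines of \eqref{matrixtau}, so the fibre action is $+1$ at one branch point and $-1$ at the other. The boundary circle of $D$ carries the image of the real line bundle $R$. I would then apply Riemann--Roch on the disc, $\mathrm{index}=\chi(D)+\mu_D=1+\mu_D$. The boundary Maslov index $\mu_D$ comes from one boundary circle of $A$, since $\tau$ exchanges $\partial^+A$ and $\partial^-A$. The branch point where the fibre action is $-1$ then contributes a half-integral correction (equivalently, a forced zero of any invariant section) that cancels against $\mu_D$. The aim of this bookkeeping is to show the net correction is zero, giving $\mathrm{index}\,L_\tau=1$. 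As a cross-check, the same computation for the anti-invariant part should give $\mathrm{index}\,L_{-\tau}=-1$, consistent with $\mathrm{index}\,L=0$. A concrete sanity check is the model problem of Proposition \ref{p:4}: the family \eqref{e:fa} lifts to a one-parameter family of invariant annuli, which should produce a one-dimensional invariant kernel.

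\emph{Step 3 (surjectivity).} Surjectivity would follow from showing $\dim\ker L_\tau=1$. I would argue via automatic transversality for line bundles on the quotient disc. After the descent in Step 2 the problem is a rank-one real Cauchy--Riemann problem, so a nonzero kernel element has isolated zeros counted positively, interior zeros with weight $2$. Its total zero count is bounded by the relevant Maslov number. In this low-index situation that bound excludes a two-dimensional kernel, because taking a combination vanishing at a prescribed boundary point would violate the zero-count bound. Hence $\dim\ker L_\tau\le 1$, and index $1$ forces $\operatorname{coker}L_\tau=0$. Alternatively, I would verify directly in the model case (the zero-order term vanishing, i.e.\ $L$ the standard $\bar\partial$ on $A\subset\mathbb{C}P^1$ with the explicit circle boundary data) that $\ker L_\tau$ is spanned by the derivative of the family \eqref{e:fa}.

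The main obstacle is Step 2: correctly accounting for the branch points and the $\mathbb{Z}_2$-twisting of the boundary line bundle in the equivariant index. Step 3 depends on that bookkeeping being right, since the zero-count bound uses the same Maslov number.
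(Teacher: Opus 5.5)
Your Step 1 is fine and matches the paper's argument for (a). The gap is Step 2. You never carry out the branch-point bookkeeping; you state the outcome you need (``net correction zero''). Moreover, the one concrete input you do supply is correct, and it forces the opposite conclusion.

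The fibre of the tautological bundle over a fixed point $[v]$ is the line $\mathbb{C}v$ itself. So $d\tau$ acts on $p^{-1}(a)$ by $+1$ at $a=-b/(2c)$ and by $-1$ at $a=\infty$ (e.g.\ $\tau(0,w_1)=(0,-w_1)$), while $d\tau=-\mathrm{id}$ on $T_aA$ at both points. Put this into the fixed-point formula the paper uses,
\[
\mathrm{index}\,L_\tau=\tfrac12\bigl(\mathrm{index}\,L+\mathrm{Lef}(d\tau,E)\bigr),\qquad \nu(a)=\tfrac14\bigl(\mathrm{tr}\,d\tau|_{p^{-1}(a)}-\mathrm{tr}\,d\tau|_{\Omega^{01}(p^{-1}(a))}\bigr).
\]
With $\epsilon_a=\pm1$ the fibre sign, this gives $\nu(a)=\tfrac14(2\epsilon_a-(-2\epsilon_a))=\epsilon_a$. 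Hence $\mathrm{Lef}=1-1=0$ and $\mathrm{index}\,L_\tau=0$; likewise $\mathrm{index}\,L_{-\tau}=0$, not $-1$. The paper gets $1$ only by asserting that $d\tau$ is the identity on $p^{-1}(a)$ at \emph{both} fixed points, so that $\nu(a)=1$ twice. That assertion is exactly what your sign observation refutes, so no bookkeeping on the quotient disc can turn your data into index $1$.

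The zero-counting tool from your Step 3 confirms this when applied on $A$ rather than on the quotient. Because $\Lambda_0^\pm$ are M\"obius bands, $R$ is non-orientable over each circle of $\partial A$, with Maslov index $-1$ and $+1$ on the two circles, summing to $\mu=0$. So a nonzero $\xi\in\ker L$ must vanish at least once on each boundary circle. The rank-one identity $\mu=2Z_{\mathrm{int}}(\xi)+Z_{\partial}(\xi)$ would then give $0\ge 2$. Hence $\ker L=0$, and with $\mathrm{index}\,L=0$ the operator $L$ is an isomorphism. So is $L_\tau$, since averaging over $\{1,\tau\}$ gives surjectivity. Thus (a) and (c) hold, but $\mathrm{index}\,L_\tau=0$, so (b) cannot be proved for the operator as set up.

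Your model sanity check fails for the same reason. The $F_2$-preimage of a disc in \eqref{e:fa} meets $L_q^+$ in a loop around the origin, and its proper transform covers the core circle of the M\"obius band twice. So the lifted annuli are two-sheeted over $A$, not graphs of sections of $p^{-1}A$. Their first variation is a $\tau$-invariant section of $(p^{-1}A)^{\otimes 2}$, on whose fibres $d\tau$ acts by $(\pm1)^2=+1$ at both fixed points. Only in that setting does the count $\tfrac12(0+2)=1$ arise.
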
 
\begin{proof}
 Claim (a) follows from Proposition \ref{p:lin} (b) and (c). 
    
To verify claim (b), we need to study the following elliptic complex $E$:
\[
0 \longrightarrow W^{1,p}(p^{-1}A,R))  \stackrel{L}\longrightarrow  
L^p(\Omega^{01}(p^{-1}A))  \longrightarrow 0.
\]
The homology groups of this complex are $H^0(E)= {\rm ker}\;L,\; H^1(E)= {\rm coker}\;L$.
By Proposition \ref{p:lin}, the group $\{\rm{id}, d \tau\}$ acts on  $E$. The quotient operator $L_\tau$ acts on invariant sections of $E$. Its index is given by 
\be \label{ind}
{\rm Index}\;\; L_\tau=\frac{1}{2}\sum_{g\in\{\rm{id}, d\hat{\tau}\}}{\rm Lef}\;(g,E).
\ee
Here,  ${\rm Lef}\;(g,E)$ indicates the Lefschetz number of the automorphism 
$g \in \{\rm{id},\tau\}$ : 

\[ {\rm Lef}\;(g,E)=
 \sum_{i=0,1}  (-1)^{i} {\rm trace_{{\mathbb{R}}} (g|_{H^i(E)})}.
\] 
 We therefore have Lef$(\rm{id}, E)=\mbox{index}\; L $, and by \cite{HLS} equation (2), we have ${\rm index}\; L=\mu + 2- \sigma -2g $. Since by Proposition \ref{tau} (a) the total Maslov index $ \mu$ vanishes, and $ \sigma $ denoting the number of  boundary components of the annulus, is two, and the genus $g$ of $A$ vanishes, we conclude that $ {\rm index}\; L=0$. 

Next recall that the fixed points of $ d \tau$ lie on the zero section   $A \times 0$  of the bundle $p^{-1}A $ are $\{-b/(2c),\infty\} \in A \times 0 \subset \mathbb{C}P^1$. By \cite{AS}, Theorem 3.1, 
the Lefschetz number is given by 
\be \label{lef}
{\rm Lef}(d \tau,E)=\sum_{a\; \in \;Fix(\;d\tau)}{\nu (a)},
\ee
where the $\nu$ involve alternating sums of the traces of $d\tau$ on $E$ and is defined by the equation:
\bean
[det_{\mathbb{R}} (\rm{id}-d\tau)|_{T_p A}] \;\nu(a)=
\mbox{tr}_{\mathbb{R}} \;d \tau |_{p^{-1} (a)} 
- \mbox{tr}_{\mathbb{R}}\; d\tau |_{\Omega^{01}( p^{-1}(a))}.
\eean
Since by equation \eqref{tau}, $d\tau (a) |_{T_a A} =-\rm{id}$ for the fixed points $a$ of $d\tau$, we have 
\[
det_{\mathbb{R}}(\rm{id}-d\tau)|T_q A=det_{\mathbb{R}} (-2\; \rm{id})=4,
\]
and compute
\bean
 \nu(a) &=& {\textstyle{\frac{1}{4}}}(\mbox{tr}\; 
d\tau|_{a}-
\mbox{tr} \;d\tau|_{\Omega^{01}(p^{-1}(a))}={\textstyle{\frac{1}{4}}}(2-(-2))=1. 
\eean
This implies by equation \eqref{lef} that $ {\rm Lef}(d \tau,E)=2 $. To justify these numbers, we consider the linear map $d\tau \in {\rm Aut}_{{\mathbb{R}}}(p^{-1}(a))$ acting on the fibre of \eqref{b} over the fixed point $a \in A$. Now we saw in the proof of Proposition 4 that this point corresponds to an eigenspace of $d\tau $ at the origin. Therefore, $d\tau$ acts as the identity on $p^{-1}(a) $ and has real trace two. 
Next we discuss $d\tau \in{\rm Aut}_{{\mathbb{R}}}(\Omega^{01}(p^{-1}(a)))$. We have $\Omega^{01}(p^{-1}(a))\cong\Omega^{01}(A) \otimes p^{-1}(a)$, and $d\tau$ acts as multiplication by -1 on $\Omega_{a}^{01}(A)$ as we see from the representation of $\tau$ being holomorphic in equation \eqref{matrixtau}, giving $(\mbox{tr}\;d\tau\; |_{\Omega^{01}(p^{-1}(a))}=-2$ for this term. This proves claim (b) by the formula for the index \eqref{ind}.

To prove claim (c), first consider the quotient bundle over the surface of disc-type
\[
(p^{-1}A_\tau, R_\tau) :=(p^{-1}A,R)/\{\rm{id},\tau \}\rightarrow (A_\tau, \partial A_\tau)=(A, \partial A)/\{\rm{id},\tau \}.
\]
Then $\Gamma(p^{-1}A_\tau, R_\tau)=\Gamma_\tau(p^{-1}A,R)$, which stands for the $\tau$-invariant sections of $p^{-1}A$ with boundary values in $R$. Next we double the quotient bundle as follows. The Riemann surface $ A_\tau$ is a pair $\{|A_\tau|, z_\alpha  \}$ of the underlying topological space and complex-valued holomorphic coordinates. Then $\overline{A_\tau}=\{|A_\tau|, \overline{z_\alpha}  \}$ is the mirror surface, and the double $A_\tau^d$ of $A_\tau$ is the topological space $  A_\tau^d= A_\tau  \cup   \overline{A_\tau}$, where $p \in \partial |A_\tau|$ is identified with $p \in \partial |\overline{A_\tau}|$, and is equipped with coordinates as follows. Let $z_\beta:A_\tau \supset U_p \to \{ \rm{Im}\: z_\beta > 0  \}$, $ \overline {z_\beta}: \overline {A_\tau} \supset \overline {U_p} \to \{ \rm{Im}\: z_\beta <  0  \}$ denote boundary charts of neighborhoods in $A_\tau$ of $p$ and in $\overline{A_\tau}$ of $p$, then  $z_\beta^d=\{ z_\beta, \overline {z_\beta} \}:U_p \cup \overline {U_p}\to{\mathbb{C}}$ is a holomorphic chart for the interior point $p \in A_\tau^d$.
In this fashion, $ A_\tau^d=\{|A_\tau| \cup (\overline{z_\alpha},z_\beta^d \}$ is a compact Riemann surface of genus zero. Note that the map $a\in A_\tau\to a \in \overline {A_\tau}$ extends to an anti-holomorphic involution of $ A_\tau^d$:
\[    
in:A_\tau^d \to A_\tau^d  ,\;\;\;\; in \circ  in =\rm{id}.
\]
The bundle $p^{-1}A_\tau$ may now be doubled to a bundle $p^{-1}A_\tau^d=p^{-1}A_\tau \; \cup \; \overline{p^{-1}A_\tau}$ over  $A_\tau^d$ by identifying the fibres $p^{-1}A_\tau(a)$, $ \overline{p^{-1}A_\tau(a)}$ over the boundary $a \in \partial A_\tau$ by reflection on the real line $R_\tau(a) \subset p^{-1}A_\tau(a)$.
The map $in$ lifts to an anti-holomorphic involution
\[   
\hat{ in}:p^{-1}A_\tau^d \to  p^{-1}A_\tau^d,
\;\;\;\; \hat{ in} \circ  \hat{ in} =\rm{id}.
\]
By \cite{MS12}, the first Chern class of the doubled bundle equals the total Maslov index of the relative bundle $(p^{-1}A_\tau , R_\tau)$: $c_1(p^{-1}A_\tau^d)=\mu(p^{-1}A_\tau , R_\tau)$. Now we define an operator 
\[ 
L^d_\tau:W^{1,p}(p^{-1}A_\tau^d)\to L^{1,p}\Omega^{01}(p^{-1}A_\tau^d),
\]
\[L^d_\tau|_{p^{-1}A_\tau}=L_\tau , \;\;
L^d_\tau|_{\overline{p^{-1}A_\tau}} = \hat{in} \circ L_\tau \circ in.
\]
Both parts are ${\mathbb{R}}$-linear differential operators and have the same symbol as the Cauchy-Riemann operator. Therefore $L^d_\tau$ is elliptic. 

We have $c_1(p^{-1}A_\tau^d)=\mu(p^{-1}A_\tau , R_\tau)=0$ by Proposition \ref{tau} (a). This verifies the inequality $c_1(p^{-1}A_\tau^d) \ge -1 $, which by  Theorem 1' and the Proposition on p158 of \cite{HLS} implies that  $L^d_\tau$ and $L_\tau$ are surjective over the surface $A^d_\tau$ of genus zero. This completes the proof of claim (c).
\end{proof} 

\noindent
{\bf Proof of Theorem \ref{t:1}:} 
By Proposition 8, the operator $L_\tau$ is surjective and, being of index one, has a 1-dimensional  kernel in $W^{1,p}$. By Theorem 4.18 of \cite{V} this implies that  the equation (\ref{cr0}) admits a one dimensional family of $\tau$-invariant solutions passing through the solution $ \varphi=0$,  which corresponds to the surface $(A, \partial A) \hookrightarrow (p^{-1}A,\Lambda_0^{\pm})$. 
Since $\Lambda_0^\pm$ is totally real in a neighborhood of $\partial A=S^\pm \subset  \Lambda_0^\pm$, the boundary problem \eqref{cr0} is strictly elliptic, and the boundary condition is $\Lambda_0^\pm \in C^{\frac{k}{2},\frac{\alpha}{2}}$, we conclude that the solutions of  (\ref{cr0}), which are of annulus-type, retain this regularity. We will now show that this family descends to a family in $\mathcal{D}(M, S)$ and thereby prove (b). For $r\in (0,1)$, let $D_r \subset D \subset \mathbb{C}$ denote the disc of radius $r$ in the unit disc and $A_r:= D \; \backslash \;D_r$. Then every domain of annulus-type is conformally equivalent to one  $A_r$, and the moduli space of $J$ - holomorphic curves of annulus-type in $(p^{-1}A,\Lambda_0^{\pm})$ is
\[
{\mathcal A}(p^{-1}A,\Lambda_0^{\pm})) : =\{
f {\rm\; mod\; Aut} A_r:f \in 
W^{1,p}(A_r,\partial A_r ; p^{-1}A,\Lambda_0^{\pm}), \bar\partial_{J}f=0,
r \in (0,1) \}.
\]
Let ${\mathcal A}_\tau(p^{-1}A,\Lambda_0^{\pm})$ be those elements of 
${\mathcal A(p^{-1}A,}\Lambda_0^{\pm})$ 
which invariant under the $J$-holomorphic involution $\tau$. They descend to $J$-holomorphic discs in ${\mathcal D}(M, S)$. This proves claim (a). By way of composing with  (\ref{e:co}) we obtain
\[
{\mathcal A}_\tau(p^{-1}A,\Lambda_0^{\pm})  
\stackrel{F_2 \circ {\rm Bl}_0^{-1}}
 \longrightarrow {\mathcal D}_q({M,S}),
\]
and its linearization at $A \times 0$
\[
{\mathcal A}_{d\tau}(T(p^{-1}A),T(\Lambda_0^{\pm})) \stackrel{d F_2 \cdot d{\rm Bl}_0^{-1} } 
\longrightarrow T_ q {\mathcal D}_q(M,S),
\]
where on the left hand side we have $J$-holomorphic annuli which are invariant under $d\tau$ and the right hand side is as in Proposition \eqref{p:4} . Therefore
\bean
T_ q {\mathcal D}(M,S) &=& T_0 (F_2 \circ {\rm Bl}_0^{-1} {\mathcal A}_\tau (p^{-1}A,\Lambda_0^{\pm}))\cr
 &=& dF_2  \cdot d{\rm Bl}_0^{-1} {\mathcal A}_{d\tau}(T(p^{-1}A),T(\Lambda_0^{\pm})) \cr
 &=& {\mathcal D}_0(\mathbb{C}^2, S_2).
\eean
The last space is given by the family \eqref{e:fa}, and this proves claim (b). The claim (a) follows from the remarks  on regularity above, and the fact that it is preserved under the transformations (\ref{e:co}) and $\tau$. \qed

\vspace{0.1in}
\noindent {\bf Proof of Theorem \ref{t:2}:} 
To prove (a) we consider the union of all embedded  discs of $\mathcal{D}_q(M,S)$. The claim (c) of Theorem \eqref{t:1} implies that near $q \in \mathcal{D}_q(M,S),$ they are disjoint as subsets of $(M,S).$ Claim (b) of  Theorem \ref{t:1} then implies that they form a real hypersurface with boundary in $(M,S).$ Denote this hypersurface by $N.$ Now claim (c) follows from claim (c) in Theorem \ref{t:1}.

\qed

\section{Holomorphic Discs near Complex points of higher order}\label{s:4}

Assume that for $F$ as in (\ref{e:1}), a point $ q \in  S$ is a complex point with local graphical Taylor expansion as follows:
\be \label{e:7}  
F(\zeta)=\left(\zeta, \zeta^{n-2}(a \zeta^2 + 2b \zeta \bar\zeta + c \bar \zeta^2) + O_{n+1}(\zeta, \bar\zeta)\right).
\ee
for some $ a, b, c \in \mathbb{C}$ $\zeta(q) =0$,   with $(b,c) \neq (0,0)$  and some  $n \ge 3$. This gives
\be
\partial F \wedge \bar\partial F = (2\zeta^{n-2}(b \zeta + c\bar{\zeta}) + O_n(\zeta, \bar \zeta))d\zeta \wedge d \bar\zeta.
\ee
This confirms that $q$ is a complex point of $S$ as per Definition \ref{d:4} and is degenerate for $n \ge 3$  and for $|b| - |c| > 0  \; ( < 0)$ has Maslov number $\nu(q) = n-3 \; (n-1)$. 

We proceed with the proof of Theorem \ref{t:3} that applies to the following surface.
\be \label{e:Shigher}
 S=\{ w=z^{n-2}(a z^2 + 2b z \bar z + c \bar{z}^2) + O_{n+1}(z,\bar z) \} \subset {\mathbb{C}}^2_{z,w},\quad  n \ge 3 \; {\rm and}  \;  |b|>|c|. 
\ee 
The first item thereof is the model family of discs.
\begin{proposition}\label{p:9}
Let $S$ be as in \eqref{e:7}, then we define
\begin{equation}\label{e:oscn}
  S_n:=\{ w=z^{n-2}(a z^2 + 2b z \bar z + c \bar{z}^2)\} \subset  \mathbb{C}^2_{z,w} ,  \quad
n \ge 3 \;\; {\rm and}  \;\;   |b|>|c|. 
\end{equation}
Consider the coordinates $(\zeta,\omega)$ as defined by: 
\[
(z,w)=\left(e^{i\delta}\zeta,\;\; \omega + (a e^{2i\delta}-{\textstyle{\frac{|c|}{|b|}}}b)\zeta^{n} \right),
\]
where $\delta=(1/2)(arg(b)-arg(c))$. Then the family ${\mathcal  D}_0({\mathbb{C}}^2,S_n)$ parametrized by $t>0$ of discs given by 
\begin{equation}
    (0,0) \in \Big\{ \left({\textstyle{\frac{|c|}{|b|}}} \zeta^{2} + 2 \zeta \bar \zeta +{\textstyle{\frac{|c|}{|b|} }}\bar \zeta^{2} < t,\;\; \omega=tb\zeta^{n-2}\right)\Big\}_{t>0} \subset \;{\mathbb{C}}^2_{\zeta ,\omega},
\end{equation}
holomorphic discs with boundary in  $S_n$ shrinking to the elliptic complex point $(0,0) \in  S_n$ as $t\to 0$.
\end{proposition}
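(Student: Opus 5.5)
The plan is to verify the claim by direct substitution, following the same pattern as Proposition \ref{p:4}, which is its case $n=2$. The key observation is that after the change of coordinates the defining function of $S_n$ factors as a holomorphic monomial times a real quadratic form. Write $\rho:=|c|/|b|<1$ and
\[
Q(\zeta):=\rho\,\zeta^2+2\zeta\bar\zeta+\rho\,\bar\zeta^2=2|\zeta|^2+2\rho\,\mathrm{Re}(\zeta^2).
\]
This is a real, positive definite quadratic form because $\rho<1$. Hence each sublevel set $\{Q<t\}$ is a filled ellipse centred at $0$, in particular a topological disc. The claim will then be that $S_n$, in the coordinates $(\zeta,\omega)$, is the graph
\[
\omega=b'\,\zeta^{n-2}Q(\zeta)
\]
for a suitable nonzero constant $b'$ with $|b'|=|b|$.

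First step: substitute $z=e^{i\delta}\zeta$ into $z^{n-2}(az^2+2bz\bar z+c\bar z^2)$. This gives
\[
e^{i(n-2)\delta}\zeta^{n-2}\bigl(ae^{2i\delta}\zeta^2+2b|\zeta|^2+ce^{-2i\delta}\bar\zeta^2\bigr).
\]
Next, choose the phase $\delta$ so that $ce^{-2i\delta}$ and $b$ have the same argument, so that $ce^{-2i\delta}=\rho b$. This has to be checked against the stated choice $\delta=\tfrac12(\arg b-\arg c)$; if it does not match, I would adjust the sign or the factor in $\delta$. The holomorphic term $a e^{2i\delta}\zeta^n$ is then traded for $\rho b\,\zeta^n$ by the shear $\omega\mapsto\omega+(\cdots)\zeta^n$ in the definition of $(\zeta,\omega)$. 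This shear is a biholomorphism of $\mathbb{C}^2$, so holomorphicity of discs is unaffected. The leftover unimodular factor $e^{i(n-2)\delta}$ can be absorbed into $b'$, or alternatively into a constant rotation of $\omega$ that is folded into the coordinate change. I expect this bookkeeping of phases to be the only delicate point. The statement as written appears to suppress the factor $e^{i(n-2)\delta}$, so I would carry it explicitly and check that it only rescales the parameter $t$ by a unimodular constant.

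Second step, the disc property. For fixed $t>0$, consider the map $\zeta\mapsto(\zeta,\,tb'\zeta^{n-2})$ on the ellipse $\{Q<t\}$. It is holomorphic, being a graph of a polynomial, and after a Riemann map to $D$ it is an embedded holomorphic disc. On the boundary $\{Q=t\}$ it satisfies $\omega=b'\zeta^{n-2}Q(\zeta)$, so its boundary lies in $S_n$.

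Third step, shrinking. As $t\to0$ the ellipses shrink to $\zeta=0$ with diameter $O(\sqrt t)$, and $|\omega|\le t|b|\,|\zeta|^{n-2}\to0$ uniformly. The discs therefore converge to the constant map at $(0,0)$, which is a complex point of $S_n$.

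Finally, I would remark that, unlike the case $n=2$, all these discs pass through the origin, since $\omega(0)=0$ for $n\ge3$. So they are not disjoint, and that is why the statement records $(0,0)\in\{\cdots\}$ in place of disjointness. Away from $\zeta=0$, however, two discs with $t\ne t'$ meet only where $Q<\min(t,t')$ and $t\zeta^{n-2}=t'\zeta^{n-2}$, which forces $\zeta=0$. So the family is disjoint away from the origin, and this is the form in which it will be used for part (c) of Theorem \ref{t:3}.
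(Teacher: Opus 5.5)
Your argument is correct and is the direct substitution the paper intends. The paper gives no separate proof here; it treats the statement as the degree-$n$ analogue of Proposition~\ref{p:4}, which it calls ``easily verified''. Your two caveats are also genuine corrections to the statement as printed: one needs $ce^{-2i\delta}=\frac{|c|}{|b|}b$, i.e.\ $\delta=\frac12(\arg c-\arg b)$ rather than $\frac12(\arg b-\arg c)$, and the unimodular factor $e^{i(n-2)\delta}$ must be carried into the shear coefficient (or into $b$), since otherwise the stated family does not have its boundary on $S_n$.
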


Next we have an $n$-fold cover of the complex plane.
\be \label{e:covern}  F_n : \mathbb{C}^2_{z_1,w_1} \to \mathbb{C}^2_{z,w} , \;
F_n(z_1,w_1)=(z_1, z_1^{n-2}(a z_1^2 + 2b z_1 w_1 + c w_1^2) ).
\ee

\begin{proposition} \label{p:11n}
Let $q$ be a complex point of $S$  as in \eqref{e:7}, $\zeta$ a complex coordinate of $S$ with $\zeta(q) = 0$ and $F$ as in equation \eqref{e:7} and  $S_n$ be as in equation \eqref{e:oscn} and $F_n$ as in \eqref{e:covern}.  Then
\begin{enumerate}
\item[(a)] the map $F_n$ is a holomorphic $n$ -fold branched covering,and we have an associated holomorphic transformation:
\[
\tau:{\mathbb{C}}^2_{z_1,w_1}\to {\mathbb{C}}^2_{z_1,w_1},
\;\;
\;\;F_n \circ \tau =F_n , \;\; \tau \circ \tau = id,
\]
\item[(b)] there are two transversal totally real planes $ L^\pm_ q \subset
{\mathbb{C}}_{z_1,w_1}$ with $ L^+_ q \cup L^-_ q \subset F^{-1}_n ( S_n)$,
\item[(c)]  $|b| - |c| > 0 ( = 0, < 0)$ if and only if there is no (one, two) complex line(s) in ${\mathbb{C}}^2_{z_1,w_1}$ which intersect both planes $L^\pm_ q$ in real lines. In addition, the Keller-Maslov index of $q$ is $n-1$ ($n-3$) for $|b| - |c| >  0 (< 0).$
\end{enumerate}
\end{proposition}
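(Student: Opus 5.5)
The plan is to follow the proof of Proposition \ref{p:6} closely, since $F_n$ differs from $F_2$ only by the holomorphic factor $z_1^{n-2}$ in the second component. The substitution $w_1=\bar z_1$ then reproduces the defining function of $S_n$ exactly.

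For part (a), I would note that for fixed $z_1\neq 0$ the second component of $F_n$ is a quadratic polynomial in $w_1$ with leading coefficient $z_1^{n-2}c$. The involution swapping its two roots is the same linear map as before:
\[
\tau(z_1,w_1)=\left(z_1,-\tfrac{b}{c}z_1-w_1\right).
\]
Here I assume $c\neq 0$ as in Proposition \ref{p:6}; the case $c=0$ has to be excluded or treated separately. Since the prefactor $z_1^{n-2}$ depends only on $z_1$, which $\tau$ fixes, we get $F_n\circ\tau=F_n$ and $\tau\circ\tau={\rm id}$ by direct computation. The branching locus consists of the discriminant locus $\{b z_1+c w_1=0\}$ of the quadratic together with the line $\{z_1=0\}$, where the fibre degenerates with multiplicity coming from $z_1^{n-2}$. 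I would count preimages with multiplicity to justify the stated covering degree $n$, by computing the local degree of $F_n$ at the origin. I expect this count to be the delicate point: away from $z_1=0$ the generic fibre has only two points, so the phrase ``$n$-fold'' has to be read as a local degree at the origin, namely $2+(n-2)=n$ from the vanishing orders.

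For part (b), I would set $L^+_q=\{w_1=\bar z_1\}$. Substituting into $F_n$ gives $(z_1,z_1^{n-2}(az_1^2+2bz_1\bar z_1+c\bar z_1^2))$, which lies in $S_n$. I would then set $L^-_q=\tau L^+_q=\{w_1=-\tfrac{b}{c}z_1-\bar z_1\}$. Total reality and transversality of the pair are identical to the computation in Proposition \ref{p:6}(b), because the planes are literally the same.

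For part (c), the first claim is the computation of Proposition \ref{p:6}(c) verbatim. Complex lines meeting both planes in real lines correspond to solutions of $e^{-2it}=-b/c-e^{-2is}$. This equation has zero, one or two solutions according as $|b/c|>2$, $=2$ or $<2$. I would need to reconcile this threshold with the stated condition on $|b|-|c|$, possibly by renormalising $b$ as in \eqref{e:2}, where $b$ carries a factor $2$; I expect this bookkeeping to be the main obstacle of the statement. For the index claim, I would compute the winding number of the coefficient of $\partial F\wedge\bar\partial F$ along a small circle $|\zeta|=r$. That coefficient is $2\zeta^{n-2}(b\zeta+c\bar\zeta)+O_n$. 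On the circle, $\zeta^{n-2}$ contributes $n-2$, and $b\zeta+c\bar\zeta$ contributes $+1$ if $|b|>|c|$ and $-1$ if $|b|<|c|$. The $O_n$ term does not change the winding for small $r$. This gives $n-1$ and $n-3$ respectively, matching the Keller--Maslov indices stated in part (c).
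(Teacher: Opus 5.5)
Your route is the paper's: solve $F_n\circ\tau=F_n$, set $L^+_q=\{w_1=\bar z_1\}$ and $L^-_q=\tau L^+_q$, then rerun Proposition \ref{p:6}. The step that fails is your formula for $\tau$, which carries over the coefficient $-b/c$ of \eqref{matrixtau}.

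\textbf{The involution is off by a factor of two.} For fixed $z_1\neq0$, the two roots in $w_1$ of $cw_1^2+2bz_1w_1+(\dots)=0$ sum to $-2bz_1/c$. So the root swap is $\tau(z_1,w_1)=(z_1,-\tfrac{2b}{c}z_1-w_1)$. With your $\tau$, the ``direct computation'' actually gives $az_1^2+2bz_1w_1'+cw_1'^2=az_1^2-\tfrac{b^2}{c}z_1^2+cw_1^2$.
\begin{itemize}
\item Hence $F_n\circ\tau\neq F_n$ unless $b=0$, and your $L^-_q=\{w_1=-\tfrac bc z_1-\bar z_1\}$ is not contained in $F_n^{-1}(S_n)$. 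So (a) and (b) fail as written.
\item Your branch locus $\{bz_1+cw_1=0\}$ is the fixed line of the corrected $\tau$, not of yours (yours fixes $\{bz_1+2cw_1=0\}$).
\item The correction also removes what you called the main obstacle in (c), with no renormalisation of $b$. Take $L^-_q=\{w_1=-\tfrac{2b}{c}z_1-\bar z_1\}$. The line $\mathbb{C}\cdot(1,m)$ meets $L^+_q$ in a real line iff $|m|=1$, and meets $L^-_q$ in a real line iff $|m+2b/c|=1$. Equivalently, $e^{-2it}+e^{-2is}=-2b/c$.
\item These are two unit circles whose centres are $2|b|/|c|$ apart. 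They meet in $0,1,2$ points according as $|b|>,=,<|c|$ (for $b\neq0$).
\item Likewise $L^+_q\cap L^-_q$ is cut out by $\bar z_1=-\tfrac bc z_1$. This forces $z_1=0$ exactly when $|b|\neq|c|$, which gives transversality in the elliptic case.
\end{itemize}

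\textbf{The reading of ``$n$-fold'' as a local degree does not work.} For $n\ge3$, $F_n(0,w_1)=(0,0)$. So $F_n$ collapses the whole line $\{z_1=0\}$ to the origin and misses $\{z=0,\,w\neq0\}$. It is not finite at the origin, so no local degree is defined, and ``$2+(n-2)$'' is not the multiplicity of anything. The generic fibre has two points. The ``$n$-fold'' assertion should be flagged as false rather than proved. What is true, and all that is used later, is that $F_n$ is a two-sheeted branched cover over $\{z\neq0\}$ with deck involution $\tau$ (assuming $c\neq0$, as you rightly note).

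Your winding-number computation of the index is correct: $n-2$ comes from $\zeta^{n-2}$, $\pm1$ from $b\zeta+c\bar\zeta$, and the $O_n$ term is dominated by $2\,\bigl|\,|b|-|c|\,\bigr|\,r^{n-1}$ on $|\zeta|=r$. It matches the statement and supplies an argument the paper's proof omits.
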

\noindent
{\bf Proof :}
Claim (a) follows from solving for $\tau = (\tau_1,\tau_2)$  the equation 
$$(z_1, z_1^{n-2}(a z_1^2 + 2b z_1 w_1 + c w_1^2) ) = (\tau_1, \tau_1^{n-2}(a\tau_1^2 + 2b \tau_1 \tau_2 + c \tau_2^2)).$$ This gives a linear holomorphic map $\tau$ as in \eqref{matrixtau}. The proofs of claims (b) and (c) follow as in the section 2.\\ 
\qed \\\\
We proceed with the blow-via $F_n$ of the pair $(\mathbb{C}^2_{z_1,w_1}, L^\pm_q )$ and make reference to the diagram \eqref{diag}, where $F_2$ is replaces by $F_n$.
\begin{proposition} \label{p:blowup-n}
Assume that $ q, S, L_q^\pm, \tau$ be as in Proposition \ref{p:11n} and assume that $|b| - |c| > 0$ . Then there exists  $\epsilon > 0$ with
\begin{enumerate}
\item[(a)]  $F_n^{-1}( S \cap B_\epsilon) $ contains a union of two embedded totally real discs which we denote by $\Lambda^+ \cup \Lambda^- \subset {\mathbb{C}}^2_{z_1,w_1},$ and which intersect transversely at the origin and $T_{(0,0)}\Lambda^\pm=L^\pm_{ q}$,
\item[(b)] If $S\in C^{k,\alpha}$,  $k-(n-2) > 0$ , ($S \in C^\omega)$ then $\Lambda^\pm\in C^{(k-(n-2))/2, \alpha/2)}$ ($C^\omega$),
\item[(c)] Let 
${\rm Bl}_0(\mathbb{C}^2) \to \mathbb{C}^2_{z_1,w_1}$ 
be the blow-down map, then the proper transform $\Lambda_0^\pm :=  {\rm Bl}^{-1}_0(\Lambda^\pm) \hookrightarrow \rm{Bl}_0({\mathbb{C}}^2)$ of $\Lambda^\pm \subset{\mathbb{C}}^2_{z_1,w_1}$ are embedded, disjoint, and totally real M\"obius bands of  the same regularity as $\Lambda_0^\pm$ in (b),
\item[(d)] The holomorphic deck transformation $\tau$ of ${\mathbb{C}}^2_{z_1,w_1}$ extends to a holomorphic transformation also denoted $\tau : \rm{Bl}_0({\mathbb{C}}^2) \to \rm{Bl}_0({\mathbb{C}}^2).$ The fixed points of $\tau$ are on the exceptional fibre and correspond to the eigenspaces of the matrix  $(\ref{matrixtau})$. 
\end{enumerate}
\end{proposition}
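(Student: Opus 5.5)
The plan is to reduce each claim to its counterpart in Proposition \ref{p:blowup}, dividing out the extra factor $z_1^{n-2}$. For (a), pull back the defining equation of $S$ from \eqref{e:Shigher} under $F_n$ and write $w_1$ for the unknown second coordinate of a preimage point. Setting $z=z_1$, a point $(z_1,w_1)$ lies in $F_n^{-1}(S\cap B_\epsilon)$ if and only if
\[
z_1^{n-2}\big(az_1^2+2bz_1w_1+cw_1^2\big)=z_1^{n-2}\big(az_1^2+2bz_1\bar z_1+c\bar z_1^2\big)+O_{n+1}(z_1,\bar z_1).
\]
Two structural facts drive the argument.
\begin{enumerate}
\item[(i)] The remainder $O_{n+1}$ vanishes to order $n+1$ at the origin. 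I would use Taylor's formula with integral remainder, in the $C^{k,\alpha}$ category, to write it as $z_1^{n-2}R(z_1,\bar z_1)$ plus terms that vanish to the same order in $\bar z_1$. Then I divide by the monomial $z_1^{n-2}$ away from $z_1=0$ and extend by continuity.
\item[(ii)] Dividing leaves a quotient $R=O_3$ that is only $C^{k-(n-2),\alpha}$. This is where the hypothesis $k-(n-2)>0$ enters.
\end{enumerate}
After this division the equation becomes the quadratic equation $cw_1^2+2bz_1w_1+az_1^2=az_1^2+2bz_1\bar z_1+c\bar z_1^2+O_3$, which is exactly the equation treated in the proof of Proposition \ref{p:blowup}(a). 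As there, Proposition \ref{p:11n}(b) supplies two transversal totally real tangent planes $L^\pm_q$: these are $w_1=\bar z_1$ and its image under $\tau$. The smooth Weierstrass Preparation Theorem \cite{GG} then yields two solution branches $w_1=w_1^\pm(z_1,\bar z_1)$ tangent to $L^\pm_q$, and these are the discs $\Lambda^\pm$. Total reality of $\Lambda^\pm$ near $0$ is an open condition, so it follows from that of $L^\pm_q$. The statement says only that $F_n^{-1}(S\cap B_\epsilon)$ \emph{contains} $\Lambda^\pm$, since the complex line $\{z_1=0\}$ is also in the preimage; this is why "contains" is the right formulation.

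For (b), the regularity bookkeeping has two losses. Division by $z_1^{n-2}$ costs $n-2$ derivatives, and extracting the square root in the quadratic branching costs a halving, exactly as in Proposition \ref{p:blowup}(b). Together these give $\Lambda^\pm\in C^{(k-(n-2))/2,\alpha/2}$. In the real analytic case both operations preserve analyticity, so $\Lambda^\pm\in C^\omega$.

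Parts (c) and (d) are then formal. For (c), the tangent planes of $\Lambda^\pm$ at $0$ are $L^\pm_q$. By Proposition \ref{p:11n}(c), the hypothesis $|b|>|c|$ means that no complex line meets both planes in real lines. Hence the real projective lines $\mathbb{P}(L^\pm_q)\subset\mathbb{C}P^1$ are disjoint circles. The proper transforms are therefore M\"obius bands glued along these circles; they are embedded and disjoint near the exceptional fibre, and remain so away from it after shrinking $\epsilon$. Total reality at points of the exceptional fibre I would check in the chart $w_1=\lambda z_1$: there the proper transform is the graph $\lambda=w_1^\pm/z_1$, whose linearization is $\lambda=\bar z_1/z_1$ (respectively its $\tau$-image). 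This is totally real along the circle by the same computation as in Proposition \ref{p:blowup}(c). Since the charts are smooth in $(z_1,\lambda)$, the regularity from (b) is preserved. For (d), $\tau$ is linear by Proposition \ref{p:11n}(a) with matrix \eqref{matrixtau}, so it maps lines through $0$ to lines and lifts holomorphically to ${\rm Bl}_0(\mathbb{C}^2)$. Off the exceptional fibre its only fixed points are those of a linear map with eigenvalues $\pm1$ restricted to the $+1$ eigenline; these are excluded from the punctured neighborhood except along the eigenline. On the fibre, the fixed points are the two eigenlines, which lie in the annulus $A$ as in Proposition \ref{tau}(c).

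I expect the main obstacle to be step (i), the division of the $C^{k,\alpha}$ remainder by $z_1^{n-2}$. The $O_{n+1}$ term need not factor through $z_1^{n-2}$: a term like $\bar z_1^{\,n+1}$ does not. So I would first use a Malgrange-type division by the monomial in the Preparation Theorem framework, treating $z_1^{n-2}$ as the divisor and keeping a remainder of lower degree in $z_1$. I would then argue that on the branches $w_1\approx\bar z_1$ this remainder is absorbed as a higher-order perturbation. This is the step where the loss of $n-2$ derivatives must be verified precisely.
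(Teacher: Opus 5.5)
Your route is the paper's own: divide the pulled-back equation by $z_1^{n-2}$, reduce to the quadratic equation of Proposition~\ref{p:blowup}, invoke the preparation theorem, and count ``$n-2$ derivatives for the division, then a halving for the square root''. The paper simply asserts $z_1^{2-n}O_{n+1}=O_3\in C^{k-(n-2),\alpha}$, and you are right to distrust this. But step (i) is a genuine gap, and your Malgrange-type repair does not close it. The part of $O_{n+1}$ not divisible by $z_1^{n-2}$ becomes, after division, a term of size $O(|z_1|^3)$ that is not smooth at $0$. Such a term can be absorbed for the existence claim (a), but not for any regularity claim. In fact (b), as stated for $\Lambda^\pm\subset\mathbb{C}^2_{z_1,w_1}$, is false in general.

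Take $O_{n+1}=\bar z^{\,n+1}$, so that $S$ is real-analytic. For $z_1\neq0$ the equation reads $cw_1^2+2bz_1w_1=2bz_1\bar z_1+c\bar z_1^2+r$ with $r=\bar z_1^{\,n+1}/z_1^{n-2}$. This $r$ is $3$-homogeneous and not a polynomial, hence not $C^3$, so your (ii) already fails once $k-(n-2)\ge 3$. Solving,
\[
w_1^\pm=\frac{-bz_1\pm\sqrt{(bz_1+c\bar z_1)^2+cr}}{c},\qquad
w_1^+=\bar z_1+\frac{\bar z_1^{\,n+1}}{2z_1^{n-2}(bz_1+c\bar z_1)}+O(|z_1|^3).
\]
The middle term is $2$-homogeneous. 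If $w_1^+$ were $C^2$, this term would have to be its quadratic Taylor term, i.e.\ a polynomial $Q$ with $\bar z_1^{\,n+1}=2z_1^{n-2}(bz_1+c\bar z_1)Q$. That is impossible, because the right side is divisible by $z_1$ and the left side is not. So $\Lambda^+$ is not even $C^2$ at the origin, although $S\in C^\omega$. Divisibility does not rescue the argument either: $r=z_1^3$ produces the non-polynomial term $z_1^3/(2(bz_1+c\bar z_1))$, since $c\neq0$. The loss comes from the division by $bz_1+c\bar z_1$ hidden in the square root, which neither the ``$n-2$'' count nor the ``halving'' captures.

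What does survive is the following. Since $|b|>|c|$ gives $|bz_1+c\bar z_1|\ge(|b|-|c|)|z_1|$ while $|cr|\le C|z_1|^3$, the discriminant has an explicit square root on the punctured disc. So no preparation theorem is needed (nor is one applicable, $r$ not being smooth), and (a) follows with $C^1$ branches tangent to $L^\pm_q$. Your remark about $\{z_1=0\}$ and ``contains'' is correct. Regularity has to be measured on the proper transforms, directly in blow-up coordinates. With $\lambda=w_1/z_1$ and $z_1=\rho e^{i\theta}$,
\[
\lambda^\pm=\frac{-b\pm\sqrt{(b+ce^{-2i\theta})^2+cE}}{c},\qquad E(\rho,\theta)=\frac{O_{n+1}(\rho e^{i\theta})}{\rho^{n}e^{in\theta}}.
\]
Here $b+ce^{-2i\theta}\neq0$, and by one-variable Taylor division in $\rho$, $E$ is $C^{k-n,\alpha}$ (resp.\ $C^\omega$) in $(\rho,\theta)$ and vanishes at $\rho=0$. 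This exhibits $\Lambda_0^\pm$ as embedded, disjoint, totally real M\"obius bands of class $C^{k-n,\alpha}$ for $k>n$ (resp.\ $C^\omega$). That is the regularity Section~\ref{s:4} actually uses.

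This also replaces your argument for (c). Near $z_1=0$ the proper transform is not the graph of $\lambda=w_1^\pm/z_1$ over the $z_1$-plane, since $\bar z_1/z_1$ has no limit at $0$. Moreover, passing from a $C^m$ graph in $\mathbb{C}^2$ to $\lambda(\rho,\theta)$ costs a derivative, so ``the regularity from (b) is preserved'' is not right.

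Two smaller points on (c)--(d). First, the involution with $F_n\circ\tau=F_n$ is $w_1\mapsto-w_1-\frac{2b}{c}z_1$. With the entry $-b/c$ printed in \eqref{matrixtau}, $F_n\circ\tau\neq F_n$, and the criterion of Proposition~\ref{p:11n}(c) that you invoke would read $|b|>2|c|$. Second, the lift of $\tau$ fixes pointwise the proper transform of its $+1$-eigenline $\{bz_1+cw_1=0\}$, the branch locus of $F_n$. So (d) holds only for fixed points on the exceptional fibre, and you should say this instead of the sentence about fixed points being ``excluded''.
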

\noindent
{\bf Proof}:
To prove claim (a) and (b) we need to solve the equation
\[
z_1^{n-2}(az_1^2 + 2bz_1w_1 + c w_1^2 ) = z_1^{n-2} (az_1^2 + 2bz_1\bar{z}_1 + c \bar{z}_1^2) + O_{n+1}(z_1,\bar{z}_1),
\]
where by assumption the term $O_{n+1}$ is $C^{k,\alpha}$ -regular. This implies that $z_1^{2-n}O_{n+1} = O_3(z_1,\bar z_1) \in C^{k-(n-2),\alpha}.$ We may now apply the Weierstrass Preparation Theorem and obtain a solution $w_1 = \bar z_1 + O_2(z_1,\bar z_1)$ of regularity $C^{(k-(n-2))/2,\alpha/2}.$ This is under the assumption that $k-(n-2) > 0.$ This proves (a) and (b) in analogy to section 2. Claims (c) and (d) follow as in in the proof of \eqref{p:blowup}.
\qed

\begin{proposition} \label{taun}
Let $(A, \partial A) \subset (p^{-1}A,\Lambda_0^\pm)$ be as in Proposition \ref{p:blowup-n} and $\tau$ as in Proposition\ref{p:11n}.  Then we have:
\begin{enumerate}
\item[(a)] The normal Maslov index $\mu(A, \Lambda_0^\pm)$ vanishes,
\item[(b)] $\tau(A,\partial^\pm A)=(A, \partial^\mp A)$, 
\item[(c)] The quotient $(A,\partial^\pm A)\to (A,\partial^\pm A) {\rm mod}{\{1,\tau}\}$ is diffeomorphic to a disc, where  the quotient map has two branching points.
\end{enumerate}
\end{proposition}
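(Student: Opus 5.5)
The plan is to reduce Proposition \ref{taun} to Proposition \ref{tau}. Everything in its statement depends only on the tangent planes $L^\pm_q$, the involution $\tau$, and their proper transforms in ${\rm Bl}_0(\mathbb{C}^2)$. None of it depends on the higher-order factor $z_1^{n-2}$.

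The first step is to verify that the linear data agree with the case $n=2$. The equation $F_n\circ\tau=F_n$ with $\tau_1=z_1$ reduces, after cancelling $z_1^{n-2}$ (off $z_1=0$, then by continuity), to $F_2\circ\tau=F_2$. Hence $\tau$ is again given by \eqref{matrixtau}, as stated in the proof of Proposition \ref{p:11n}. Likewise $F_n^{-1}(S_n)\supset F_2^{-1}(S_2)\supset L_q^+=\{w_1=\bar z_1\}$, and $L_q^-=\tau L_q^+$. The proper transforms $\Lambda_0^\pm$ therefore meet the exceptional fibre $\mathbb{C}P^1$ in exactly the same two loops as before. By Proposition \ref{p:blowup-n}(a), $T_0\Lambda^\pm=L^\pm_q$, and the proper transform records only tangent directions at the origin. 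In homogeneous coordinates these loops are the circles $[z_1:w_1]=e^{-2it}$ and $[z_1:w_1]=-b/c-e^{-2is}$. By Proposition \ref{p:11n}(c) they are disjoint when $|b|>|c|$, which is the standing hypothesis in \eqref{e:Shigher}. They bound the same annulus $A\subset\mathbb{C}P^1$ as in Section \ref{s:3}.

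With this identification the three claims follow as in Proposition \ref{tau}.

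For (a), the two boundary components of $A$ carry opposite induced orientations. The involution $\tau$ interchanges them and preserves the totally real boundary data, so their Maslov contributions are equal in absolute value and opposite in sign. The normal Maslov index is therefore $0$.

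For (b), $\tau$ preserves the exceptional fibre because it is linear. It maps $\Lambda_0^+$ to $\Lambda_0^-$ because $\tau L^+_q=L^-_q$. Hence it exchanges $\partial^\pm A$. Since $\tau$ is a homeomorphism of $\mathbb{C}P^1$ preserving the union of the two boundary circles and swapping them, it maps the component $A$ to itself.

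For (c), the fixed points of $\tau|_{\mathbb{C}P^1}$, namely $[z_1:w_1]\mapsto -b/c-[z_1:w_1]$, are $-b/(2c)$ and $\infty$. The condition $|b/(2c)|>1$ places both fixed points outside the two unit circles, hence in the interior of $A$. The quotient $A/\{{\rm id},\tau\}$ is then a double branched cover with two branch points. An Euler characteristic count, $0=2\chi(A/\tau)-2$, gives $\chi=1$ with one boundary circle, so the quotient is a disc.

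The main obstacle is the elliptic condition. In Section \ref{s:3} the separation of the circles used $|b/c|>2$, whereas here the hypothesis is stated as $|b|>|c|$. I would first check that the definition of $L_q^\pm$ in Proposition \ref{p:11n}(c) yields disjoint circles and $|b/(2c)|>1$. If it does not, I would reinterpret the ellipticity hypothesis (possibly after the rescaling $w_1\mapsto\lambda w_1$) so that the computation of Proposition \ref{p:6}(c) applies verbatim.
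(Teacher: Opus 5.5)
Your reduction is essentially the one the paper makes. Its proof of (a) just defers to Proposition \ref{tau}, and for (b)--(c) it only notes that $\{\mathrm{id},\tau\}$ lies in the deck group of $F_n$ and has two fixed points. Your explicit check that $\tau$, $L^\pm_q$ and $A$ coincide with the $n=2$ data, together with the count $0=2\chi(A/\tau)-2$, is more complete than what the paper writes.

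However, the obstacle you flag is a real gap as your proposal stands, and neither of your remedies closes it. With the formulas you quote ($\tau|_{\mathbb{C}P^1}\colon m\mapsto -b/c-m$ in the slope $m=w_1/z_1$, circles centred at $0$ and $-b/c$), disjointness of the circles and $|b/(2c)|>1$ both amount to $|b|>2|c|$, which $|b|>|c|$ does not give. Rescaling $w_1$, or any complex-linear change of coordinates, acts on the exceptional curve by a M\"obius map, so it cannot change whether the two circles meet. Strengthening the hypothesis to $|b|>2|c|$ would discard part of the elliptic range. The actual fix is that these formulas drop a factor $2$. The set $F_2^{-1}(S_2)$ is cut out by $(w_1-\bar z_1)\bigl(c(w_1+\bar z_1)+2bz_1\bigr)=0$, so $L^-_q=\{w_1=-\tfrac{2b}{c}z_1-\bar z_1\}$ and $\tau$ has lower-left entry $-2b/c$. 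The loops are then $|m|=1$ and $|m+2b/c|=1$, which are disjoint iff $|b|>|c|$. The fixed points of $m\mapsto -2b/c-m$ are $m=-b/c$ and $m=\infty$, and $-b/c$ lies outside both circles iff $|b|>|c|$. With this correction your (b) and (c) hold under exactly the stated hypothesis.

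Your argument for (a), which reproduces that of Proposition \ref{tau}(a), does not prove vanishing. The contribution of a single boundary circle is not well defined: it depends on the trivialization of $p^{-1}A$ over $A$. A change of trivialization by $g\colon A\to\mathbb{C}^*$ shifts the two contributions by $2d$ and $-2d$, where $d$ is the degree of $g$ on one boundary circle. Only the sum is invariant, and the fact that $\tau$ swaps the circles says nothing about its value. A computation is needed, for instance the following.
\begin{itemize}
\item Let $D_+=\{|m|\le1\}$ and $D_-=\{|m+2b/c|\le1\}$ be the complementary discs in $\mathbb{C}P^1$.
\item In the frame $(1,m)$ of the tautological line over $D_+$, the boundary line over $m$ is $\mathbb{R}e^{i\theta}$ with $e^{2i\theta}=\bar m$, because $(x,\bar x)=x(1,\bar x/x)$. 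Hence $\mu(D_+)=-1$.
\item The map $\tau$ carries $D_+$ with its boundary data holomorphically onto $D_-$, so $\mu(D_-)=-1$.
\item Gluing gives $\mu(A)+\mu(D_+)+\mu(D_-)=2c_1(\mathcal{O}(-1))=-2$, hence $\mu(A)=0$.
\end{itemize}
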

\noindent
{\bf Proof :} The proof of (a) is identical to that in Proposition \ref{tau}. In claims (b) and (c), the group generated by $\{ id, \tau\}$ is a subgroup of the deck group of $F_n$. Thereby, this group leaves invariant the the fibres of $F_n$ and, given that the zero section of $p^{-1}A$ contains two fixed points of $\tau$, the quotient is a smooth surface of vanishing genus, which proves claim (c).
\qed
\begin{proposition} \label{p:linn} Let $p^{-1}A,\Lambda_0^\pm, J, \psi, L,T$ be as Propositions \ref{taun} and \ref{p:blowup-n}
The ${\mathbb{R}}$ -linear differential operators L and T as in \eqref{L} have the following properties:
\begin{enumerate}\item[(a)]     $L\psi=\nabla\psi+i
\nabla\psi\circ j + dJ_{21}(O_{p^{-1}A})\cdot \psi \circ j$,
\item[(b)] $L$ is Fredholm,
\item[(c)] L is T-equivariant: $ L\; T =dT\; L$. \end{enumerate} 
\end{proposition}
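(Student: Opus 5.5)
The plan is to run the proof of Proposition \ref{p:lin} again in the present setting, after first checking that the local data entering that proof are the same here. By Proposition \ref{p:blowup-n}(c),(d) and Proposition \ref{taun}, we again have the holomorphic annulus $A\subset\mathbb{C}P^1$ in the exceptional fibre of ${\rm Bl}_0(\mathbb{C}^2)$. Its boundary lies on the totally real M\"obius bands $\Lambda_0^\pm$, now of regularity $C^{(k-n+2)/2,\alpha/2}$. We also have a holomorphic involution $\tau$ of the relative bundle \eqref{b}, and it is linear of the form \eqref{matrixtau}. The only changes from Section \ref{s:3} are the covering map ($F_n$ in place of $F_2$) and the regularity of the boundary condition. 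Neither change affects the tangent-level structure along the zero section $O_{p^{-1}A}=A\times 0$: there the complex structure still has the block form \eqref{J0}, because $J$ is the standard structure of ${\rm Bl}_0(\mathbb{C}^2)$ restricted to the tautological bundle over $A$.

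For (a), I would differentiate $\bar\partial(O+t\psi)=\nabla(t\psi)-K\nabla(t\psi)$ at $t=0$. Write $K\nabla\varphi=(J_{21}+J_{22}\nabla\varphi)(J_{11}+J_{12}\nabla\varphi)^{-1}$. Along the zero section we have $J_{11}=j$, $J_{22}=i$, $J_{12}=0$ and $J_{21}=0$. The derivative of the $J_{21}$ term gives $dJ_{21}(O)\cdot\psi\circ j^{-1}$. The derivative of the remaining terms gives $i\nabla\psi\, j^{-1}$. Using $j^{-1}=-j$ and the paper's sign convention for $\bar\partial$, this produces the stated formula. Nothing here depends on $n$.

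For (b), the operator $L$ is the standard Cauchy--Riemann operator plus a zeroth-order real-linear term. The real line subbundle $R=p^{-1}A|_{\partial A}\cap T\Lambda_0^\pm$ is totally real, since $\Lambda_0^\pm$ is totally real by Proposition \ref{p:blowup-n}(c). Hence the boundary problem $L:W^{1,p}(p^{-1}A,R)\to L^p(\Omega^{01}(p^{-1}A))$ is a Riemann--Hilbert problem satisfying the Lopatinskii condition on the compact surface $\overline A$, and it is Fredholm by the standard theory \cite{MS12}. For this step I need $R$ to be at least continuous (in fact H\"older). This follows from (b) of Proposition \ref{p:blowup-n}, given $k-(n-2)>0$.

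For (c), I would reproduce the computation in Proposition \ref{p:lin}(c). Because $\tau$ is $J$-holomorphic (it is linear and complex, and extends to the blow-up), we have $Jd\tau=d\tau J$, hence $KT=TK$, and therefore $\bar\partial(t\varphi)=T\bar\partial\varphi$. Taking $\varphi_r=r\psi$ and differentiating at $r=0$ then gives $LT=dT\,L$.

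The step I expect to be the main obstacle is checking that the induced map $t$ on sections is well defined near the zero section. That requires $t_0$ to be invertible in its first component, so that $\tau$ maps graphs to graphs, and that $\tau$ preserves $\Lambda_0^\pm$ as a pair. In the present setting $\tau$ is only a subgroup element of the deck group of $F_n$. I would settle this by noting two facts:
\begin{itemize}
\item $\tau$ is linear and is the same as in \eqref{matrixtau}, so its action on the base $\mathbb{C}P^1$ is the M\"obius involution $[z_1:w_1]\mapsto -b/c-[z_1:w_1]$, which preserves $A$;
\item the preservation of $\Lambda^+\cup\Lambda^-$ follows from $F_n\circ\tau=F_n$ and uniqueness in the Weierstrass-preparation solution of Proposition \ref{p:blowup-n}(a).
\end{itemize}
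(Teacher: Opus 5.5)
Your proposal is correct and follows essentially the same route as the paper. The paper simply notes that the computation of Proposition \ref{p:lin}(a) does not depend on the regularity of $\Lambda_0^\pm$, that (b) follows because $L$ has the Cauchy--Riemann symbol, and that (c) follows by the same linearization, since $\tau$ is $J$-holomorphic. Your extra checks (that $R$ is totally real, which the boundary problem needs beyond the symbol, and that $\tau$ preserves $A$ and $\Lambda_0^\pm$ so that $t$ is well defined) are left implicit in the paper and strengthen the argument rather than change it.
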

\noindent
 {\bf Proof} : Claim (a) is proved identically as in Proposition \ref{p:lin} since the is does not depend on the regularity of $\Lambda_0^{\pm}.$ This implies claim (b) since the symbol of $L$ is that of the Cauchy-Riemann operator. Since  
 $\tau$ is $J$ -holomorphic,
 laim (c) finally again follows by linearization is in the proof of  Proposition \ref{p:lin}.

\qed \\
By the last Proposition, $L$ induces an ${\mathbb{R}}$-linear operator on the $ d \tau$-invariant sections, namely:
 $L_\tau:W^{1,p}_{\tau}(p^{-1}A,R))\to L^p_\tau(\Omega^{01}(p^{-1}A))$. For this operator we have the following.
\begin{proposition}
\label{Ltau}
For the $d \tau$-equivariant operator 
$L_\tau$ acting on $\tau$-invariant sections of $(p^{-1}A, R)$ we have: 
\begin{enumerate}
\item[(a)] $L_\tau$ is Fredholm,
\item[(b)] $index_{{\mathbb{R}}}\;  L_\tau=1$,
\item[(c)] $L_\tau$ is surjective. 
\end{enumerate} 
\end{proposition}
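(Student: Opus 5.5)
The plan is to reduce Proposition \ref{Ltau} to the argument already given for Proposition \ref{tauequiv}. The higher-order data enter only through the regularity of $\Lambda_0^\pm$ (Proposition \ref{p:blowup-n} (b)). They do not affect the topology of $(A,\partial A)$, the linearized boundary condition $R$, the involution $\tau$, or the local action of $d\tau$ at its fixed points. Since $L_\tau$ is the linearization at the zero section $A\times 0$, and $\tau$ is again the linear map \eqref{matrixtau} (Proposition \ref{p:11n} (a)), every ingredient of the index computation is identical to the quadratic case.

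For claim (a), Proposition \ref{p:linn} (b) shows that $L$ is Fredholm. By Proposition \ref{p:linn} (c), $L$ commutes with the $\{\rm{id},d\tau\}$-action, so $L$ splits as a direct sum of operators on the $\pm1$ eigenspaces of the action. The operator $L_\tau$ is the restriction to the invariant summand, and a direct summand of a Fredholm operator is Fredholm.

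For claim (b), I use the averaging formula \eqref{ind}: ${\rm index}\,L_\tau=\frac12({\rm Lef}(\rm{id},E)+{\rm Lef}(d\tau,E))$.
\begin{itemize}
\item The first term is ${\rm index}\,L=\mu+2-\sigma-2g$ by \cite{HLS}. Proposition \ref{taun} (a) gives $\mu=0$, and the annulus has $\sigma=2$ and $g=0$, so this term is $0$.
\item The second term is computed by the Atiyah--Singer holomorphic Lefschetz formula \eqref{lef}. Summing over the two fixed points $-b/(2c)$ and $\infty$ of $\tau$ on the exceptional fibre, which lie in $A$ by Proposition \ref{taun} (c) and the ellipticity $|b|>|c|$, gives $2$.
\end{itemize}
At each fixed point $a$, $d\tau|_{T_aA}=-\rm{id}$, so $\det(\rm{id}-d\tau)=4$. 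The fibre $p^{-1}(a)$ is an eigenline of \eqref{matrixtau} with eigenvalue $\pm1$. On $\Omega^{01}$ the extra factor $-1$ comes from $T_aA$. This yields $\nu(a)=1$, hence ${\rm index}\,L_\tau=\frac12(0+2)=1$.

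The step I expect to be the main obstacle is this local fixed-point computation. The trace on the fibre depends on which eigenvalue of \eqref{matrixtau} belongs to which eigenline: the fibre over $[1:-b/(2c)]$ versus that over $[0:1]$. I would check directly that $\tau$ acts on each fibre so that the pair of contributions matches the quadratic case. If one fibre carries eigenvalue $-1$, I would recompute $\nu(a)$ with the sign-corrected traces, since the formula is identical to the $n=2$ case and only these traces enter.

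For claim (c), I pass to the quotient bundle $(p^{-1}A_\tau,R_\tau)$ over the disc $A_\tau$ and double it along $R_\tau$ to a bundle over the genus-zero surface $A^d_\tau$. Then $c_1(p^{-1}A^d_\tau)=\mu(p^{-1}A_\tau,R_\tau)=0\ge-1$ by Proposition \ref{taun} (a). The surjectivity criterion of \cite{HLS} then applies to the doubled operator $L^d_\tau$, and hence to $L_\tau$. This argument uses only the Maslov index and the genus, which are unchanged from Proposition \ref{tauequiv}; the weaker $C^{(k-n+2)/2,\alpha/2}$ regularity of $\Lambda_0^\pm$ still gives a continuous totally real boundary condition, which suffices for the $W^{1,p}$ theory.
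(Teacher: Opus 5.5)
Your reduction to Proposition \ref{tauequiv} is the same as the paper's, and (a) is fine. The step you flagged, however, is a genuine gap, and it resolves against you. The fibre of the tautological bundle over a fixed point of $\tau$ in $A$ is the corresponding eigenline itself, and $d\tau$ acts on it by the eigenvalue. By \eqref{matrixtau}, $\tau(0,1)=(0,-1)$. So over the fixed point $\infty$ (the line spanned by $(0,1)$) the fibre eigenvalue is $-1$; only over $-b/(2c)$ is it $+1$. If the fibre eigenvalue is $\varepsilon$, then $d\tau$ acts on $\Omega^{01}\otimes p^{-1}(a)$ by $-\varepsilon$, so $\nu(a)=(2\varepsilon-(-2\varepsilon))/4=\varepsilon$. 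Your assertion that eigenvalue $\pm1$ yields $\nu(a)=1$ is therefore wrong for $-1$. Hence $\mathrm{Lef}(d\tau,E)=1+(-1)=0$, and \eqref{ind} gives $\mathrm{index}\,L_\tau=\tfrac12(0+0)=0$, not $1$. Appealing to Proposition \ref{tauequiv} does not help. Its proof claims $d\tau$ is the identity on both fibres, which contradicts its own eigenvector computation, and that claim is the only source of the value $1$.

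An independent check confirms the index is $0$. The bands $\Lambda_0^\pm$ are M\"obius, so $R$ is non-orientable over each boundary circle and every section of $R$ there has a zero. Yet a nonzero holomorphic section with boundary in $R$ has $\mu$ equal to twice its number of interior zeros plus its number of boundary zeros, and $\mu=0$. Hence the complex-linear model operator has trivial kernel and is an isomorphism of index $0$. A $\tau$-equivariant homotopy of the zeroth-order term then gives $\mathrm{index}\,L_\tau=0$.

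This does not conflict with Bishop's theorem. The lifted Bishop annuli have boundary $\{(z,\bar z)\}$ on $L_q^+$, and $[z:\bar z]$ winds twice around $\partial^+A$. So they project to $A$ with degree two: they are two-valued sections $\pm\varphi$, not elements of $\Gamma_\tau(p^{-1}A,\Lambda_0^\pm)$.

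Your argument for (c) has the matching flaw. At $\infty$ the quotient by $\tau$ is only an orbifold bundle, since invariant sections vanish there. After desingularizing, the quotient disc problem has Maslov index $-1$, not $\mu(A,\Lambda_0^\pm)=0$. \cite{HLS} still gives surjectivity, but with index $0$. So claim (b) cannot be reached along this route: for the operator as set up, the index is $0$.
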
 
 \noindent
 {\bf Proof :} Claim (a) follows as did claim (a) in
Proposition \ref{tauequiv}. The computation proving claim (b) of Proposition \ref{tauequiv} also applies here since it only uses $\mathbb{Z}_2$ -equivariance of the operator $L$ and the Maslov index of $A, \Lambda_0^{-1}$, as well as the Fredholm index of $L.$
\qed \\

\noindent
{\bf Proof of Theorem \ref{t:3}:}  We argue as in the proof of Theorem \ref{t:1}  that equation (\ref{cr0}) admits a one dimensional family of $ \tau$-invariant solutions passing through the solution $(A, \partial A) \hookrightarrow (p^{-1}A,\Lambda_0^{\pm})$. Since $\Lambda_0^\pm$ is totally real in a neighborhood of $\partial A=S^\pm \subset  \Lambda_0^\pm$, the boundary problem \eqref{cr0} is strictly elliptic. By Proposition \ref{p:blowup-n},  the boundary condition $\Lambda_0^\pm$ being of regularity class $C^{(k-(n-2))/2, \alpha/2)}$, the solutions of equation (\ref{cr0}), which are of annulus-type, retain this regularity.

For $r\in (0,1)$, let $D_r \subset D$ denote the disc of radius $r$ and $A_r:= D \; \backslash \;D_r$. Then every domain of annulus-type is conformally equivalent to one  $A_r$, and the moduli space of $J$-holomorphic curves of annulus-type in $(p^{-1}A,\Lambda_0^{\pm}))$ is
\[
{\mathcal A}(p^{-1}A,\Lambda_0^{\pm})=\{
f {\rm\; mod\; Aut} A_r:f \in 
W^{1,p}(A_r,\partial A_r ; p^{-1}A,\Lambda_0^{\pm}), \bar\partial_{J}f=0 \}.
\]
Let ${\mathcal A}_\tau(p^{-1}A,\Lambda_0^{\pm})$ be those elements of 
${\mathcal A(p^{-1}A,}\Lambda_0^{\pm})$ 
which are invariant under the $J$-holomorphic involution $\tau$. They descend to $J$-holomorphic discs in ${\mathcal D}(M, S)$ by way of composing with  the map (\ref{e:co}): 
\[
{\mathcal A}_\tau(p^{-1}A,\Lambda_0^{\pm})  
\stackrel{F_n \circ {\rm Bl}_0^{-1}}
 \longrightarrow {\mathcal D}_q({M,S}),
\]
and its linearization at $A \times 0$
\[
{\mathcal A}_{d\tau}(T(p^{-1}A),T(\Lambda_0^{\pm})) \stackrel{d F_n \cdot d{\rm Bl}_0^{-1} } 
\longrightarrow T_ q {\mathcal D}_q(M,S),
\]
where on the left hand side, we have $J$-holomorphic annuli which are invariant under $d\tau(0)$. Therefore
\bean
T_ q {\mathcal D}(M,S) &=& T_0 (F_n \circ {\rm Bl}_0^{-1} {\mathcal A}_\tau (p^{-1}A,\Lambda_0^{\pm}))\cr
 &=& dF_n  \cdot d{\rm Bl}_0^{-1} {\mathcal A}_{d\tau}(T(p^{-1}A),T(\Lambda_0^{\pm})) \cr
 &=& {\mathcal D}_0(\mathbb{C}^2, S_n).
\eean
The last space is given by the family \eqref{e:fa}, and this proves claim (b). Claim (a) follows from the regularity in Proposition \ref{p:blowup-n}, and the fact that it is preserved under the transformations (\ref{e:co}) and $\tau$.
\qed

\vspace{0.2in}
\noindent{\bf Statements and Declarations:}
 The authors have no financial or non-financial interests that are directly or indirectly related to this work. No data was collected during the course of this work. No AI was used in this work.

\end{document}